\newcommand{\mat}{\left[ \begin{array}{c}  }
\newcommand{\rix}{\end{array} \right] }
\newcommand{\C} {\mathbb{C}}
\newcommand{\eps} {\varepsilon}
\newcommand{\Om}{\Omega}
\newcommand{\lsim}{\lesssim}
\newcommand{\U}{\mathcal{U}}
\newcommand{\V}{\mathcal{V}}
\newcommand{\A}{\mathbf{A}}
\newcommand{\B}{\mathbf{B}}
\newcommand{\I}{\mathbf{I}}
\newcommand{\M}{\mathbf{M}}
\newcommand{\N}{\mathbf{N}}
\renewcommand{\S}{\mathbf{S}}
\newcommand{\D}{\mathbf{D}}
\renewcommand{\P}{\mathbf{P}}
\newcommand{\R}{\mathbf{R}}
\newcommand{\J}{\mathbf{J}}
\renewcommand{\u}{\mathbf{u}}
\newcommand{\vc}{\mathbf{v}}
\newcommand{\f}{\mathbf{f}}
\newcommand{\Uc}{\mathbf{U}}
\newcommand{\g}{\mathbf{g}}
\newcommand{\x}{\mathbf{x}}
\newcommand{\Q}{\mathbf{Q}}
\newcommand{\y}{\mathbf{y}}
\newcommand{\w}{\mathbf{w}}
\newcommand{\Hh}{\mathbf{D}}
\newcommand{\Cnn}{\mathbb{C}^{n\times n}}
\newcommand{\nsp}{{\mathcal N}}
\newcommand{\ran}{{\mathcal R}}
\newcommand{\beqo}{\begin{eqnarray*}}
\newcommand{\beq}{\begin{eqnarray}}
\newcommand{\eeqo}{\end{eqnarray*}}
\newcommand{\eeq}{\end{eqnarray}}
\author{Luis Garc\'{i}a Ramos, Reinhard Nabben}
\title{A two-level shifted Laplace Preconditioner for Helmholtz Problems: Field-of-values analysis and wavenumber-independent convergence}
\begin{document}
\maketitle

\begin{abstract} One of the main tools for solving linear systems arising from
the discretization of the Helmholtz equation is the shifted Laplace
preconditioner, which results from the discretization of a perturbed Helmholtz
problem $-\Delta u - (k^2 + i \varepsilon )u = f$ where $0 \neq \varepsilon \in \mathbb{R}$
is an absorption parameter. In this work we revisit the idea of combining the
shifted Laplace preconditioner with two-level deflation and apply it to Helmholtz
problems
discretized with linear finite elements. We use the convergence theory of GMRES
based on the field of values to prove that GMRES
applied to the two-level preconditioned system with a shift parameter $\eps
\sim
k^2$ converges in a number of iterations independent of the wavenumber $k$, 
provided that the coarse mesh size $H$ satisfies a condition of the form
$Hk^{2} \leq C$ for some constant $C$ depending on the domain but independent
of the wavenumber $k$. This behaviour is sharply different to the standalone
shifted Laplacian, for
which wavenumber-independent GMRES convergence has been established only under
the condition that $\varepsilon \sim k$ by [M.J. Gander, I.G. Graham and E.A. Spence,
Numer. Math., 131 (2015),
567-614]. Finally, we present numerical evidence that
wavenumber-independent convergence of GMRES also holds for pollution-free
meshes, where the coarse mesh size  satisfies $Hk^{3/2} \leq C
$, and inexact coarse grid solves.
\end{abstract}

\maketitle

\section{Introduction}

In this work we study the solution of linear systems of equations
arising
from
the discretization of the Helmholtz equation. 
We concentrate here on the interior
Helmholtz problem with impedance boundary conditions, which for a domain
$\Omega \subset \mathbb{R}^d$ with boundary $\Gamma$, $k \in \mathbb{R}$, $f_1
\in L^{2}(\Omega)$ and $f_2 \in L^{2}(\Gamma)$ is defined as:
\begin{equation}\label{eq:helmholtz}
\left\{
  \begin{array}{rc}
   -\Delta u -k^2u &= f_1  \,\, \text{ in $\Omega$,} \\
  \partial_n u -iku &= f_2 \,\, \text{ in $\Gamma$}.
  \end{array}
\right.
\end{equation}

The solution of these systems of equations is one of the main computational
bottlenecks for solving inverse
problems in various applications, e.g., in exploration
geophysics and medical imaging. The standard
variational formulation of 
\eqref{eq:helmholtz} and its corresponding Galerkin discretization with
P1 finite
elements on a simplicial mesh $T_h$ of the
domain $\Omega$ leads to a linear system
\begin{equation}\label{eq:ls}
	\A\u=\f,
\end{equation}
where $\A \in \C^{N_h \times N_h}$, and $ \u,\f \in \C^{N_h \times N_h}$. Due
to
the
oscillatory character of the solutions, in order to obtain an accurate
approximation the number of gridpoints in one dimension should be at least
proportional to $k$, leading to linear systems of size $N_h  \sim k^d$ where
$d$ is the spatial dimension. However, the Galerkin solutions are affected by 
the pollution effect \cite{Babuska:2000cf,Ihlenburg:1998hm} and this rule is
not sufficient to mantain accuracy when using discretizations with low-order
finite elements for large wavenumbers. In this case the number of
points in one-dimension should be chosen proportional to $k^{3/2}$ leading to
very large linear systems of size  $N_h \sim k^{3d/2}$. Moreover, the matrix
$\A$ is
non-Hermitian and indefinite, making the efficient solution of \eqref{eq:ls}
with standard iterative techniques a huge challenge; for a survey see
\cite{Ernst:2012km}.

The development of fast solvers for the Helmholtz equation has been an
active research area over the last decades. Notable works include the
wave-ray method \cite{Brandt:1997uda,Livshits:2006eo}, methods based on domain
decomposition,
and sweeping-type preconditioners
\cite{Engquist:2011hi,Engquist:2011jv,ZepedaNunez:2015tj}
, see, e.g., the survey papers
\cite{Erlangga:2007gd,Ernst:2012km,Gander:2019kr} for more references.

A
very fruitful idea introduced in
the landmark paper 
\cite{Erlangga:2006fp}  is to
precondition \eqref{eq:ls} with the discretization of a Helmholtz problem
with absorption (or shifted Laplace problem) of the form
\begin{equation}\label{eq:sslaplace}
\left\{
  \begin{array}{rc}
   -\Delta u -(k^2 + i \eps )u &= f_1  \,\, \text{ in $\Omega$,} \\
  \partial_n u -iku &= f_2 \,\, \text{ in $\Gamma$},
  \end{array}
\right.
\end{equation}
where $\eps$ is a real positive parameter. After discretization of
\eqref{eq:sslaplace}
one obtains a
linear
system with coefficient matrix
$\A_{\eps}$. The complex shift $i\eps$ in \eqref{eq:sslaplace}
reduces the oscillations in the solutions
and allows the preconditioner $\A_{\eps}$ to be inverted with fast methods,
e.g. multigrid or domain decomposition. This preconditioner is known as the
complex shifted Laplacian (CSL) or shifted Laplace preconditioner,  and is 
a 
building block for several state-of-the-art solvers for the
Helmholtz
equation, see, e.g., the book \cite{Lahaye:2017wi} for a recent overview of
extensions and industrial applications of Helmholtz
solvers based on the shifted Laplacian. The resulting
preconditioned system
 \[\A\A_{\eps}^{-1}\g =\f, \,\, \u = \A_{\eps}^{-1} \g\]
can be solved more efficiently than the original system with Krylov subspace
methods for non-Hermitian
systems. Here we only consider the solution of the linear system with the GMRES
method. In practice,  the preconditioner is inverted approximately with a fast
method; denoting this approximation by $\D_{\eps}^{-1} \approx \A_{\eps}^{-1}$,
the linear system to be analyzed is
 \begin{equation}\label{eq:lsprac}
 \A\D_{\eps}^{-1}\g =\f, \,\,\u =   \D_{\eps}^{-1} \g.\end{equation}
 
Naturally
there is a tradeoff in the choice of the shift $\eps$, since a small
shift leads to faster convergence of the Krylov solver but only a large enough
shift will allow the (approximate) inversion of $\A_{\eps}$ with  fast methods.
To date, the most rigorous analysis on
how to choose the shift has appeared in \cite{Gander:2015jf}, and the
series of papers \cite{Cocquet:2017jw,Graham:2017eg,Graham:2018vf}. The authors
of \cite{Gander:2015jf} propose to separate the analysis of the
shifted Laplacian in two questions:
\begin{enumerate}
\item Assuming that $\A_\eps$ is inverted exactly, determine conditions on
$\eps$ for $\A_{\eps}$ to be a good preconditioner for $\A$.
\item Determine conditions on $\eps$ for $\D_{\eps}^{-1}$ to be a good
approximation for $\A_{\eps}^{-1}$, in particular when using multigrid or
domain decomposition methods.
\end{enumerate}
More rigorously, one can use the identity
\begin{align*}
\I - \A\D_{\eps}^{-1} = \I- \A_{\eps} \D_{\eps}^{-1} + \A_{\eps}
\D_{\eps}^{-1}(\I-\A\A_{\eps}^{-1}),
\end{align*}
to show that if both $\|\I- \A_{\eps}\D_{\eps}^{-1}\|$ and $\|\I-
\A\A_{\eps}^{-1}\|$ are small (i.e., the quantitative version of statements (a)
and (b)) then GMRES applied to \eqref{eq:lsprac}, is expected to converge fast.

Some of the early works on the shifted
Laplacian \cite{vanGijzen:2007ir,Erlangga:2006fp,Erlangga:2004bg}
focused
on
answering question (a) using the GMRES residual bounds based on the spectrum of
the matrix. In these papers the choice  $\eps = \beta k^2$ with $\beta \in
[0.5,1]$ was recommended, based on an analysis of the spectrum of the 
preconditioned (continuous) Helmholtz operator with Dirichlet boundary
conditions in 1D in
\cite{Erlangga:2007gd} and the spectrum of more general preconditioned discrete
Helmholtz operators in \cite{vanGijzen:2007ir}. Likewise, a combination of
questions (a) and (b)  was investigated in
\cite{Cools:2013hm}
using Local Fourier
Analysis, leading to an expression for the near-optimal value
 (for guaranteed  multigrid V-cycle convergence and a near-optimal minimum
number
of
GMRES
iterations) depending on the wavenumber and gridsize. For more references on
how to choose the shift in the CSL preconditioner
see \cite[Section 1.1]{Gander:2015jf}.

The
analysis of GMRES convergence based on the distribution of the eigenvalues of
the preconditioned matrix rests on the assumption
that the condition number of the matrix of eigenvectors of the preconditioned
matrix is small. However, this factor is hard to
estimate and for some problems it can be so large that the bound may not be
informative at all. Moreover, it is known that the convergence of
GMRES applied to a non-normal linear system cannot be predicted only by the
spectrum of the matrix \cite{Greenbaum:1996cp,Liesen:2012tt}. The authors of \cite{Gander:2015jf} use instead the convergence theory of GMRES
based on the field of values (which we summarize in section 3.) to study
question
(a)
above.
Their
main
result
shows
that under some natural assumptions on the geometry of the domain the condition $\eps
\lsim
k$ (i.e., $\eps \leq Ck$ with a small enough constant $C$)
is
sufficient
for
the
field
of
values
of
$\A\A_{\eps}^{-1}$ to be bounded away from the origin as $k \to
\infty$, and 
therefore
under this condition the number of
iterations of GMRES applied to the preconditioned system remains constant as
$k$ is increased.

Question (b) has been studied in \cite{Cocquet:2017jw} for
general 2D and 3D problems in the context of the multigrid method.
There it is shown that choosing
a shift $\eps \sim k^2$ is a necessary and sufficient condition for the
convergence of multigrid with a fixed number of (weighted Jacobi) smoothing
steps applied to a
linear system with coefficient matrix $\A_{\eps}$. Regarding domain
decomposition methods and question (b), the work \cite{Graham:2017eg}
investigates the requirements for an additive Schwarz preconditioner 
$\mathbf{D}_{\eps}^{-1} \approx \A_{\eps}^{-1}$  (with Dirichlet transmission
conditions between subdomains and  coarse grid correction) to be
effective as a preconditioner for a problem with coefficient matrix
$\mathbf{A}_{\eps}$, i.e., to obtain fast convergence of GMRES applied to a
linear system with coefficient matrix $\mathbf{A}_{\eps}
\mathbf{D}_{\eps}^{-1}$.
There it is concluded that a sufficient condition is $\eps \lsim k^2$. The more recent
paper  \cite{Graham:2018vf} introduces an additive Schwarz method with local
impedance boundary conditions as transmission conditions between subdomains and
shows that it is possible to obtain wavenumber independent convergence also
under the condition $\eps \sim k^{1+\beta}$ for $\beta$ arbitrarily small.
Together, these recent results imply that there exists a rigorously quantified
gap between the condition for having a good preconditioner (which requires a small shift $\eps
\lsim k$) and the requirement for being able to (approximately) invert $\A_{\eps}$ (for which $\eps
\sim k^2$ or at least $\eps \sim k^{1+\beta}$ with $\beta >0$ arbitrarily small is
necesary), thus motivating the need for more advanced preconditioning techniques.

In this paper we revisit the combination of the shifted Laplacian
with two-level deflation, which was introduced in
\cite{Sheikh:2013iq,Sheikh:2016eg}
based on previous work in \cite{Erlangga:2008va}, see also
\cite{GarciaRamos:2018gy} for an analysis of the method proposed in
\cite{Erlangga:2008va}. We extend the
simplified variant
of
two-level deflation from \cite{Sheikh:2013iq,Sheikh:2016eg}
to Helmholtz problems discretized with finite
elements, and we contribute to the line of analysis proposed in
\cite{Gander:2015jf} by studying the analogous of question (a) for the
two-level
shifted Laplacian combined with deflation. We use the framework for the analysis of two-level preconditioners from \cite{Hannukainen:2011tya}.

The method that we use combines the shifted Laplacian
with a 
projection-based
preconditioner which removes the components of
$\A\A_{\eps}^{-1}$
that cause the slow convergence of a Krylov subspace method applied to the
linear
sytem. 
Algebraically
this
idea
can
be
motivated
as
follows. Let $\Q \in \Cnn$ be a projection operator, i.e., a matrix such that
$\Q^{2}= \Q$. If $\ran(\Q)$ is the range of $\Q$ and $\nsp(\Q)$ its nullspace,
the direct sum decomposition $\mathbb{C}^n = \ran(\Q) \oplus \nsp(\Q)$ holds
and $\A^{-1}$
can be split as
\[ \A^{-1} = \A^{-1}(\I-\Q) +\A^{-1}\Q, \]
note that $(\I-\Q)$ is also a projection. Therefore, a sensible approximation
for the inverse  of $\A$ is
\begin{equation}\label{eq:projprec}
\B = \A_{\eps}^{-1}(\I-\Q) +\A^{-1}\Q.\end{equation}
The crucial point is that the projection can be chosen so that the term
$\A^{-1}\Q$ on the right can be computed cheaply  by solving a smaller linear
system. If $m <n$ and $\mathcal{U}$
is an  subspace of $\C^n$ spanned by the columns of a full rank matrix $\Uc \in
\C^{n \times m}$
such that $\Uc^\ast \A \Uc$ is nonsingular (where the superscript $*$ denotes
the conjugate transpose), the projection $\Q$ with $\ran(\Q) = \A \mathcal{U} $
and $\nsp( \Q) = \mathcal{U}^{\perp}$ (the orthogonal complement of $\U$ in the
Euclidean inner product) has the form 
\[\Q = \A \Uc (\Uc^*\A\Uc)^{-1}\Uc^*,\]
which gives
\begin{equation}\label{eq:projectionprec}
\B = \A_{\eps}^{-1}(\I-\A\Uc (\Uc^*\A\Uc)^{-1}\Uc^*)
+\Uc(\Uc^*\A\Uc)^{-1}\Uc^*
\end{equation}
and the term $\Uc(\Uc^*\A\Uc)^{-1}\Uc^*$ can be computed by inverting a smaller
system with coefficient matrix $\Uc^*\A\Uc$. It follows easily using this
projection representation that if $\B$ is used as a preconditioner
for
$\A$
then $\A\B$ equals the identity when restricted to the subspace $\U$,
so the spectrum of the preconditioned matrix $\A\B$ contains one as an
eigenvalue with multiplicity (at least) $m= \mathrm{dim}(\U)$. Moreover, if the
subspace $\U$ contains the solution $\u$ we have $\f = \A \u \in \A\U$, and
GMRES applied to the preconditioned linear system $\A\B \u = \f$ with a
zero
initial guess will converge in one step.  Projection-based preconditioners of
the form \eqref{eq:projectionprec} are related to classical deflation methods
in which a projection operator is used to remove near-singular eigenspaces
responsible for slowing down the convergence of a Krylov subspace iteration,
the main difference being that in classical deflation the resulting deflated
linear system is singular, i.e., the eigenvalues are shifted to zero, not to
one. For more on the connection between two-level methods, projections and
deflation see \cite{GarciaRamos:2020fu}.
 
An important class of methods that  lead to preconditioners of the
form
\eqref{eq:projectionprec}
are two-level (or two-grid) methods, which are the  basis of the
multigrid
method \cite{Trottenberg:2001uw}. In
\cite{Sheikh:2013iq,Sheikh:2016eg},
the
authors consider a finite difference discretization of the problem on a grid
$G_h$ and a coarse grid $G_H \subset G _h$, and choose the subspace $\U$ as
the span of the columns of (the matrix representation of) the two-grid
prolongation operator $\P$. Here we analyze an extension of this preconditioner
to the finite element setting, where the mesh $T_h$ is coarsened by
choosing a mesh $T_H$ such that the elements in $T_H$ are unions of
elements of $T_h$. If $\V_h, \V_H$ are spaces of P1 finite elements  to $T_h$
and $T_H$ respectively, we then have $\V_H
\subset \V_h$ and the prolongation operator is defined trivially by  this
inclusion.  The resulting preconditioner is (see section 4. for    
more details)
\[ \B_{\eps} = \A_{\eps}^{-1}(\I-\A \P \A_H^{-1}\P^*) + \P \A_H^{-1}\P^*,
\]
leading to the preconditioned system \begin{equation*}
	\A\B_{\eps}\g=\f.
\end{equation*}

The main result in our paper is Theorem \ref{thm:main}, where we prove that it
is possible to close the gap between the requirements for $\eps$, i.e., we
show that
wavenumber-independent GMRES convergence can be obtained with the two-level
shifted Laplacian
even
in the
case of a large shift $\eps \sim k^2$, provided that the coarse grid size
satisfies a condition of the form $Hk^{2} <C$ for some constant $C>0$ depending
only on the domain $\Omega$ (but independent of the wavenumber $k$). 
Note that in this theorem we are assuming that the shifted Laplacian  and the
coarse grid system are inverted exactly.
  
This result also confirms what has been previously observed in
the spectral analysis of a 1D model problem in \cite{Sheikh:2013iq, Lahaye:2017wi}
where
it has been
shown (using
Fourier
analysis on a one-dimensional Helmholtz problem with Dirichlet boundary
conditions) that when the shifted Laplacian is combined with two-level
deflation
 with a complex shift
$\eps = \beta k^2$ it is possible to increase $\beta$ without greatly affecting
the spectrum of the preconditioned matrix.

We
prove
Theorem \ref{thm:main} for weighted GMRES in the inner product induced by the
inverse of the domain mass matrix,
 and show that this norm is a natural norm to measure the
residuals
of
the
preconditioned system since it corresponds to the dual $L^2$ norm when
$\C^{N_h}$ is identified with the space of coordinates of $\V_h'$ (Proposition
\ref{thm:dual}, (c)). Fortunately, for a sequence of quasi-uniform meshes one
can use a scaling argument and norm equivalences to show that the result also
holds for GMRES in the Euclidean inner product (Corollary
\ref{thm:corollarynorm}, part (a)).

This paper is organized as follows: In section 2. we review some basic results
on the variational formulation of Helmholtz  problems, focusing on the
conditions for existence and uniqueness of solutions to these problems and
their stability. In sections 3. and 4. we introduce the finite element
formulation of the Helmholtz and shifted Laplace problems and the convergence
theory of GMRES based on the field of values. In section 5. the two-grid
preconditioner is introduced and we prove our main result. Finally, in section
6.
we present some numerical experiments to illustrate our results.
 
\section{Preliminaries: A recap of the variational formulation and finite
element approximation of Helmholtz problems}

In this section we review some basic results on the variational
formulation
of
Helmholtz  problems, focusing on the conditions for existence and uniqueness of
solutions to these problems and their stability. In the second part we discuss
the finite element approximation of Helmholtz problems with the Galerkin
method. We refer the reader to \cite{Spence:2015jl} for a very good
introduction
to
the
variational formulation of Helmholtz problems. To simplify the notation, we
will write
$a
\lsim b$ to denote that there exists a constant $C$ such that $a \leq C b$
independent of the  parameters on which $a$ and $b$ may depend. Moreover, we
write $a \sim b$ when $a \lsim b$ and $b \lsim a$. 

Given a complex inner product space $\V$ we denote its sesquilinear inner
product by $(\cdot,\cdot)_{\mathcal{V}}$. The antidual space (of
continuous conjugate-linear functionals from $\V$ to $\C$)
is denoted by $\V^{'}$.
The duality pairing
$\langle \cdot,\cdot \rangle_{\V' \times \V}: \V' \times \V \to \C$  is defined
for $f \in \V'$, $v \in \V$ as
\[\langle f,v \rangle_{\V' \times \V}  =f(v).\]
The dual norm in the space $\V^{'}$ is defined by
\[\|f\|_{\V^{'}} =  \sup_{0 \neq v \in \V} \frac{|\langle f,v\rangle _{\V'
\times \V}|}{\|v\|_{\V}}.\]
We will drop the subscripts $\V',\V$ when this introduces no ambiguities, and
write only $\langle \cdot, \cdot \rangle$ and $\|\cdot\|$. Let
$\Omega
\subset
\mathbb{R}^d$ be  a convex  polyhedron in $\mathbb{R}^d$ (where $d=1,2,3$) with
boundary $\Gamma$.  Recall that the Sobolev space $L^2(\Omega)$ of square
integrable functions
is equipped
with
the
inner
product
\[(u,v)_{L^2(\Omega)}= \left(\int_{\Om}u\overline{v} \right)^{1/2}.\]  
The higher order
Sobolev spaces $H^{m}(\Omega)$ consist of functions $u \in L^2(\Omega)$ that
have weak derivatives $\partial^{\alpha} u$ in $L^{2}(\Omega)$ for all
multi-indices
$\alpha$ with $|\alpha| \leq m$. The standard inner product in $H^{m}(\Omega)$
is given by 
 \[(u,v)_{m}  = \sum_{|\alpha| \leq m} (\partial^{\alpha} u, \partial^{\alpha}
v)_{L^2(\Omega)},\]
 and the induced norm is denoted by $\|\cdot\|_{m,\Omega}$. In the space
$H^{m}(\Omega)$
we also introduce the seminorm
 \[|u|_{m} =  \sum_{|\alpha| = m} \|\partial^{\alpha} u\|_{L^2(\Omega)}.\] 
The variational formulation of the Helmholtz problem requires the use of a
special $k$-weighted inner product in the space 
 $H^{1}(\Omega)$. Given $k \in \mathbb{R}$, we define
the
Helmholtz
energy
inner
product on  $H^{1}(\Omega)$ by
\[(u,v)_{k,1, \Om}= (\nabla u, \nabla v)_{L^2(\Om)}+k^2(u,v)_{L^2(\Om)},\]
for any $u,v \in  H^{1}(\Omega)$. The induced norm will be denoted by $\|\cdot
\|_{1,k,\Om}$. After multiplying each side of \eqref{eq:helmholtz} with a test
function $v \in
H^1(\Omega)$, integrating by parts and substituting the boundary condition, the
problem can be restated in the variational form
\begin{equation}\label{eq:helmholtzvar1}
\text{Find $u \in H^{1}(\Omega)$ such that } \, a(u,v)=\langle f, v \rangle
\text{ for all } v \in H^{1}(\Omega),
\end{equation}
where the sesquilinear form $a: H^{1}(\Omega) \times H^{1}(\Omega) \to \C $ and
the antilinear functional $f: L^{2}(\Omega) \to \C$ are given by
\begin{align} \label{eq:helmform}
	a(u,v)&= \int_{\Omega} \nabla u \overline{\nabla v} - k^2 \int_{\Omega} u
\overline{v} - ik \int_{\Gamma}u\overline{v},\\
	\langle f,v \rangle &= \int_{\Omega} f_1 \overline{v} + \int_{\Gamma}
f_2\overline{v}.	
\end{align}

The following lemma summarizes some properties of the sesquilinear form of the
Helmholtz problem.

\begin{lemma} Let $a$ be the sesquilinear form~\eqref{eq:helmform}
 of the Helmholtz problem. The following properties hold:
\begin{itemize} 
\item[(a)] \cite[Lemma 8.1.6]{Melenk:1998tb},~\cite[p.118]{Spence:2015jl} The
form $a$ is continuous with continuity constant $C_c$ independent of $k$, i.e.,
there exists $C_c$ such that for all $u,v \in H^1(\Omega), k \in \mathbb{R}$:
\[|a(u,v)| \leq C_c\|u\|_{1,k,\Omega} \|v\|_{1,k,\Omega}\]
\item[(b)] The form $a$ satisfies the G\aa rding inequality (as an equality)
	\begin{equation}
		\|u\|_{1,k,\Omega}^2 = \Re a(u,u)+2k^2\|u\|_{L^2(\Omega)}^2.
	\end{equation}
\end{itemize}
\end{lemma}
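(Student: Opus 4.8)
The plan is to dispatch part (a) by citing the references already quoted in its statement: the proofs in \cite{Melenk:1998tb,Spence:2015jl} rest on the Cauchy--Schwarz inequality together with a trace inequality to control the boundary integral, and the constant $C_c$ is $k$-independent precisely because of the $k$-weighting built into $\|\cdot\|_{1,k,\Omega}$. Part (b) is the genuinely new content, but it is purely algebraic once the sesquilinear form is evaluated on the diagonal, so I would prove it by direct computation.

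First I would set $v = u$ in the definition~\eqref{eq:helmform}, which gives
\[ a(u,u) = \int_{\Omega} |\nabla u|^2 - k^2 \int_{\Omega} |u|^2 - ik \int_{\Gamma} |u|^2. \]
The first two integrals are real, equal to $\|\nabla u\|_{L^2(\Omega)}^2$ and $k^2\|u\|_{L^2(\Omega)}^2$ respectively, whereas $\int_{\Gamma}|u|^2$ is a nonnegative real number, so the impedance term $-ik\int_{\Gamma}|u|^2$ is purely imaginary. Taking the real part therefore annihilates the boundary contribution entirely and leaves
\[ \Re a(u,u) = \|\nabla u\|_{L^2(\Omega)}^2 - k^2 \|u\|_{L^2(\Omega)}^2. \]

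The last step is to invoke the definition of the Helmholtz energy norm, $\|u\|_{1,k,\Omega}^2 = \|\nabla u\|_{L^2(\Omega)}^2 + k^2\|u\|_{L^2(\Omega)}^2$, and split off $2k^2\|u\|_{L^2(\Omega)}^2$, so that
\[ \|u\|_{1,k,\Omega}^2 = \bigl(\|\nabla u\|_{L^2(\Omega)}^2 - k^2\|u\|_{L^2(\Omega)}^2\bigr) + 2k^2\|u\|_{L^2(\Omega)}^2 = \Re a(u,u) + 2k^2\|u\|_{L^2(\Omega)}^2, \]
which is the asserted identity. I do not expect any serious obstacle: the only points that require attention are the observation that the sign-indefinite contribution $-k^2\|u\|_{L^2(\Omega)}^2$ in $\Re a(u,u)$ is exactly compensated by adding back $2k^2\|u\|_{L^2(\Omega)}^2$, and the fact that the impedance boundary term drops out of $\Re a(u,u)$ precisely because it is imaginary. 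The value of stating the G\aa rding inequality as an \emph{equality} rather than an estimate is that it will later allow the coercive and indefinite parts of $a$ to be tracked exactly in the field-of-values analysis.
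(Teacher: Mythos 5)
Your proposal is correct and matches what the paper does: part (a) is delegated to the cited references, and part (b) follows from exactly the diagonal computation you give (the impedance term $-ik\int_\Gamma|u|^2$ is purely imaginary for real $k$, so $\Re a(u,u)=\|\nabla u\|_{L^2(\Omega)}^2-k^2\|u\|_{L^2(\Omega)}^2$, and adding $2k^2\|u\|_{L^2(\Omega)}^2$ recovers $\|u\|_{1,k,\Omega}^2$). The paper omits this verification as immediate, so there is nothing to compare beyond noting that your argument is the standard and essentially unique one.
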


It can be shown \cite[Lemma 6.17]{Spence:2015jl}
that associated to the
sesquilinear form $a$ there exists a bounded operator $\mathcal{A}:
H^{1}(\Omega)\to H^{1}(\Omega)'$ such that
\begin{equation}\label{eq:operatorA}
a(u,v) = \langle \mathcal{A} u , v \rangle, 
\end{equation}

using the operator $\mathcal{A}$, the variational problem can be rewritten as
\begin{equation}\label{eq:helmholtzvar1}
\text{Find $u \in H^{1}(\Omega)$ such that } \mathcal{A} u = f.
\end{equation}

The next theorem gives a stability estimate for the Helmholtz problem. For the
definition of the norm $\|\cdot \|_{1/2, \Gamma}$ see \cite[Section
6.2.4]{Hackbusch:1992fl}

\begin{theorem} \label{thm:stab}
Let $\Omega \subset \mathbb{R}^d$ be a bounded convex domain with boundary
$\Gamma$, where $d=2,3$. Given $k_0>0$, there exists a constant $C$ (depending
only on $\Omega$) such that for any $g \in L^2(\Omega), h \in L^2(\Gamma)$ and
$k > k_0$ the solution of the Helmholtz problem satisfies	
\begin{align}
		\|u\|_{1,k,\Omega} &\leq C(\|f_1\|_{L^2(\Omega)}+\|f_2\|_{L^2(\Gamma)}).
\end{align}
Moreover, if $u \in H^2(\Omega)$:
\begin{equation}
|u|_{H^2(\Omega)} \leq  C[(1+k)(\|f_1\|_{L^2(\Omega)} +\|f_2\|_{L^2(\Gamma)}
)+\|f_2\|_{1/2,\Gamma}].
\end{equation}
\end{theorem}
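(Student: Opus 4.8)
The plan is to prove the two bounds by complementary arguments, both exploiting the geometry of $\Omega$. For the $k$-weighted $H^1$ estimate I would use the Rellich--Morawetz multiplier technique, the standard route to wavenumber-explicit stability for the impedance problem; convexity is what makes it work, since a convex $\Omega$ is star-shaped with respect to every interior point and, after translating the star-center to the origin, satisfies $x\cdot n \geq 0$ on $\Gamma$. First I would test the weak form \eqref{eq:helmholtzvar1} with $v=u$ and separate real and imaginary parts. The volume terms of $a(u,u)$ are real and only the impedance term $-ik\int_\Gamma |u|^2$ is imaginary, so the imaginary part gives $k\|u\|_{L^2(\Gamma)}^2 \lsim |\langle f,u\rangle|$, i.e. control of the boundary trace of $u$ by the data and $\|u\|_{1,k,\Omega}$. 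The real part records the indefinite identity $\|\nabla u\|_{L^2(\Omega)}^2 - k^2\|u\|_{L^2(\Omega)}^2 = \Re\langle f,u\rangle$, useless alone but essential to the recombination below.

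The core is the Morawetz--Rellich identity obtained by testing the strong equation $-\Delta u - k^2 u = f_1$ against the multiplier $Mu = x\cdot\nabla u + \tfrac{d-1}{2}u$. Integration by parts turns $2\Re\int_\Omega(\Delta u + k^2 u)\,\overline{Mu}$ into a positive multiple of $\|\nabla u\|_{L^2(\Omega)}^2 + k^2\|u\|_{L^2(\Omega)}^2$ plus boundary integrals. On $\Gamma$ I would substitute the impedance condition $\partial_n u = iku + f_2$; the decisive feature is that the boundary term carrying the factor $x\cdot n$ has a sign fixed by star-shapedness and can therefore be discarded, while the surviving boundary contributions are dominated by the trace estimate from the first step together with $\|f_2\|_{L^2(\Gamma)}$. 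Combining this identity with the imaginary-part bound and using Young's inequality to absorb the cross terms (such as $k\|u\|_{L^2(\Omega)}\|f_1\|_{L^2(\Omega)}$ and $\|f_2\|_{L^2(\Gamma)}\|u\|_{L^2(\Gamma)}$) into the left-hand side then yields $\|u\|_{1,k,\Omega} \lsim \|f_1\|_{L^2(\Omega)} + \|f_2\|_{L^2(\Gamma)}$.

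For the $H^2$ bound I would recast the equation as $-\Delta u = f_1 + k^2 u$ with Robin datum $\partial_n u = iku + f_2$ and invoke elliptic $H^2$-regularity for the Laplacian on a convex polyhedron, where the regularity constant is benign precisely because there are no reentrant corners. This produces $|u|_{H^2(\Omega)} \lsim \|f_1\|_{L^2(\Omega)} + k^2\|u\|_{L^2(\Omega)} + \|\partial_n u\|_{1/2,\Gamma}$, and $\|\partial_n u\|_{1/2,\Gamma} \lsim k\|u\|_{H^1(\Omega)} + \|f_2\|_{1/2,\Gamma}$ by a trace estimate. Since $k\geq k_0$, both $k^2\|u\|_{L^2(\Omega)}$ and $k\|u\|_{H^1(\Omega)}$ are bounded by $k\|u\|_{1,k,\Omega}$, and the first estimate gives $k\|u\|_{1,k,\Omega} \lsim k(\|f_1\|_{L^2(\Omega)}+\|f_2\|_{L^2(\Gamma)})$, which reproduces the advertised $(1+k)$ prefactor.

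The main obstacle I anticipate is the rigorous justification of the multiplier identity and the bookkeeping of its boundary terms: the multiplier $x\cdot\nabla u$ need not lie in $H^1(\Omega)$ when $u$ is merely an $H^1$ solution, so I would first derive the identity for smooth (or $H^2$) solutions and pass to the limit by density, and I would have to check that each boundary integral produced either carries the favorable convexity sign or is dominated by the quantities already controlled. The remaining combine-and-absorb steps are routine.
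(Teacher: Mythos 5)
The paper does not prove this theorem itself; its ``proof'' is a pointer to \cite[Prop.~8.1.4]{Melenk:1998tb} for $d=2$ and \cite[Prop.~3.4, 3.6]{Hetmaniuk:2007bx} for $d=3$, and those references establish the bounds by exactly the route you describe: the imaginary-part trace estimate, the Rellich--Morawetz multiplier $x\cdot\nabla u + \tfrac{d-1}{2}u$ with the star-shapedness sign condition, and elliptic $H^2$-regularity on a convex domain for the second bound. Your sketch is therefore essentially the same argument as the one the paper delegates to, and the difficulty you flag (justifying the identity for $H^1$ solutions and tracking the signs of the boundary terms) is precisely the technical content carried out in those cited works.
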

\begin{proof}
See \cite[Prop. 8.1.4]{Melenk:1998tb} for the case $d=2$ and
\cite[Prop.
3.4,
3.6]{Hetmaniuk:2007bx} for the case $d=3$.
\end{proof}

\subsection{Finite element approximation of Helmholtz problems}

In this section we recall the Galerkin formulation of the Helmholtz problem.
Let $\{T_h\}_{h >0}$ be a family of conforming simplicial meshes of
$\Omega$, where 
$h
=
\max_{K \in T_h}
\mathrm{diam}(K)$ denotes the mesh diameter. We assume that the family
$\{T_h\}_{h
>0}$ is shape-regular, i.e., \[ \sup_{h > 0 } \max_{K \in T_h}
\frac{\mathrm{diam}(K)}{\rho(K)} \leq C < \infty.\]

We let $\V_h$ be the space of P1 finite elements subordinate to $T_h$ spanned
by the standard nodal basis
$\Phi_h=(\phi_1,\ldots,\phi_{N_h})$. 
To simplify the notation, in what follows we omit the subscripts and denote the
$L^2$ inner product by $(\cdot, \cdot)$  and the corresponding norm by
$\|\cdot\|$.
Every element $u= \sum_{i=1}^{N_h} u_i \phi_i \in \V_h$ can be represented by
the vector of coordinates $\u = (u_1, \ldots, u_{N_h}) \in \C^{N_h} $, we write
this correspondence as
\[ u = \Phi_h \u .\]
Associated to $\Phi_h$ there exists a canonical basis of antilinear functionals
$\Phi^{'}_h=(\phi_1^{'}, \ldots, \phi_{N_h}^{'})$ for the space $\V_h^{'}$,
that satisfies
\[\langle \phi_{i}^{'}, \phi_{j}\rangle  = \phi_i^{'}(\phi_j)= \delta_{ij}, \,
\text{for } \, i,j,=1,  \ldots,N_h, \]
We write $f = \Phi_h' \f$ for the coordinate correspondence between $\C^{N_h}$
and $\V_h'$. In this notation, we have
\[ \u = (\langle \phi_1^{'},u \rangle, \ldots, \langle \phi_{N_h}^{'}, u
\rangle)^*,\]
here $^*$ denotes the conjugate transpose.  Recall that if $\Hh \in
\C^{N\times N}$ is a Hermitian positive definite (HPD) matrix, the inner
product $(\cdot, \cdot)_{\Hh}$  induced by $\Hh$ on $\C^N$ is defined as
 \[(\x, \y)_{\Hh} = \y^* \Hh \x,\]
 the corresponding inner product on $\mathbb{C}^n$ will be denoted by
$\|\cdot\|_{\Hh}$.

When $\C^{N_h}$ is identified with the coordinate space of $\V_h$ via
$\Phi_h$, the domain mass matrix $\M \in \C^{N_h \times \N_h}$ defined as
\begin{equation}\label{eq:mass}
	\M _{ij} = ( \phi_j, \phi_{i})_{L^2(\Omega)},  \,\, 1 \leq i, j \leq N_h.
\end{equation}
induces a norm in $\C^{N_h}$ corresponding to the $L^2$ norm in the space
$\V_h$.
This  implies that for all $\u, \vc \in \C^{N_h}$ and $u=\Phi_h \u,
v= \Phi_h \vc\in \V_h$:
\begin{align*} (\u,\vc)_\M &= (u,v)_{L^2(\Omega)}.
\end{align*}
The Riesz representation theorem implies that for every $f \in \V_h'$ there
exists a unique $u_{f} \in \V_h$ such that 
\[\int_{\Omega} u_f \overline{v}=\langle f,v \rangle,\]
for all $v \in \V_h$. The mapping $\tau: \V_h' \to \V_h$ defined by
$\tau(f)=u_f$ is called the Riesz map (with respect to the $L^2$ inner
product). The corresponding representation in $\C^{N_h}$ of the duality pairing
$\langle \cdot, \cdot \rangle$, the $L^2$ Riesz map and the dual norm is
explained in the next proposition, for completeness we include the proof from
chapter 6 of \cite{Malek:2014je}.

\begin{proposition} \label{thm:dual}
	The following statements hold:
\begin{enumerate}
 \item The duality pairing $\langle \cdot, \cdot  \rangle: \V_{h}^{'} \times
\V_h \to \C$ is represented by the Euclidean product in $\C^{N_h}$, that is,
for $\f, \u \in \C^{N_h}$, $u=\Phi_h \u $ and $f= \Phi^{'}_h\f$:
 \[   \langle f,u \rangle  = \u^*\f. \]

\item The matrix representation of the $L^2$ Riesz map $\tau: \V_h' \to  \V_h 
$
is

the inverse of the mass matrix $\M$:
  \begin{equation} \label{eq:invmass}\M^{-1}=\Phi_h \circ \tau \circ \Phi' \in \C^{N_h \times N_h}.
  \end{equation}
  
 \item The norm in  $\C^{N_h}$ corresponding to the dual norm in $\V_h'$ is the
norm induced by the inverse of the mass matrix $\M^{-1}$ from \eqref{eq:invmass}, that is, for $\f \in \C^{N_h}$ and $f= \phi^{'} \f$
we have:
  \[ \| \f \|_{\M^{-1}} = \sup_{\u \in \C^{N_h}} \frac{|\u^*\f| }{\|\u\|_\M}=
\sup_{u \in \V_h} \frac{|\langle f, u \rangle |}{\|u\|}. \]
\end{enumerate}
\end{proposition}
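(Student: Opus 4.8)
The plan is to establish the three statements by unwinding the coordinate representations through the canonical bases $\Phi_h$ and $\Phi_h'$, using the defining duality property $\langle \phi_i', \phi_j \rangle = \delta_{ij}$ as the central computational tool. All three parts are essentially bookkeeping exercises in moving between the function spaces $\V_h, \V_h'$ and their coordinate spaces in $\C^{N_h}$, so I would prove them in the given order, since each part feeds naturally into the next.

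\textbf{Part (a).} For the duality pairing, I would write $u = \Phi_h \u = \sum_j u_j \phi_j$ and $f = \Phi_h' \f = \sum_i f_i \phi_i'$, then expand by conjugate-linearity in the first slot and linearity in the second. Since the functionals are antilinear, the scalar $\overline{f_i}$ should be handled carefully; using $\langle \phi_i', \phi_j \rangle = \delta_{ij}$ collapses the double sum
\[
\langle f, u \rangle = \left\langle \sum_i f_i \phi_i', \sum_j u_j \phi_j \right\rangle = \sum_{i,j} \overline{f_i}\, u_j\, \delta_{ij} = \sum_i \overline{f_i} u_j,
\]
which rearranges to $\u^* \f$ once I track the conjugation convention consistently with the identification $\u = (\langle \phi_1', u\rangle, \ldots)^*$ stated above. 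The main subtlety here is getting the conjugate on the correct factor so that the result matches the Euclidean product $\u^*\f$ rather than its conjugate.

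\textbf{Part (b).} For the Riesz map, I would start from its definition $(\tau f, v)_{L^2} = \langle f, v \rangle$ for all $v \in \V_h$, set $v = \phi_i$, and express $\tau f = \Phi_h \g$ in coordinates. The left side becomes $(\Phi_h \g, \phi_i)_{L^2} = \sum_j g_j (\phi_j, \phi_i) = (\M \g)_i$ by the definition~\eqref{eq:mass} of the mass matrix, while the right side is $\langle f, \phi_i \rangle = \overline{f_i}$ (or $f_i$ after conjugation) by part (a). This yields $\M \g = \f$, i.e. $\g = \M^{-1}\f$, establishing that the coordinate action of $\tau$ is multiplication by $\M^{-1}$, which is exactly~\eqref{eq:invmass}.

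\textbf{Part (c).} For the dual norm, I would combine parts (a) and (b) with the relation $(\u,\vc)_\M = (u,v)_{L^2}$ stated just before the proposition. Starting from the definition of the dual norm and substituting the Euclidean representation from (a), the supremum $\sup_u |\langle f, u\rangle| / \|u\|$ becomes $\sup_\u |\u^* \f| / \|\u\|_\M$. The remaining task is to evaluate this supremum explicitly: a change of variable $\w = \M^{1/2}\u$ converts it to a standard Euclidean Cauchy--Schwarz problem whose maximizer gives $\|\M^{-1/2}\f\| = \|\f\|_{\M^{-1}}$. \emph{The hard part}, if any, is purely notational --- keeping the sesquilinear conventions straight across the three identifications --- rather than conceptual, since the entire proposition is a faithful transcription of the abstract Riesz-representation picture into the chosen coordinate bases.
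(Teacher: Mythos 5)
Your proof follows the same route as the paper's: a direct coordinate computation for (a), reading off the matrix of $\tau$ from its defining identity for (b), and reducing the dual-norm supremum to a Cauchy--Schwarz argument for (c) (the paper works with Cauchy--Schwarz in the $\M^{-1}$ inner product where you substitute $\w=\M^{1/2}\u$; the two are interchangeable). The one thing to fix is the sesquilinearity convention in part (a): the paper defines $\V_h'$ as the space of \emph{conjugate-linear} functionals, so the pairing $\langle f,u\rangle=f(u)$ is linear in $f$ and conjugate-linear in $u$, which collapses the double sum to $\sum_i f_i\overline{u_i}=\u^*\f$ directly. Your expansion places the conjugation on the opposite slot and yields $\sum_i\overline{f_i}u_i=\f^*\u=\overline{\u^*\f}$, which is the conjugate of the claimed identity, not a rearrangement of it. With the convention straightened out, the rest of your argument goes through as in the paper; in particular, in part (b) the right-hand side is $\langle f,\phi_i\rangle=f_i$ (not $\overline{f_i}$), so $\M\g=\f$ and $\g=\M^{-1}\f$ as you conclude.
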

\begin{proof} For part (a), let $\u = (u_1,\ldots,\u_{N_h})$ and $\f =
(f_1,\ldots,\f_{N_h})$. We have
\begin{align*}
\langle f,u \rangle &= \langle \sum_{i=1}^{N_h} f_i \phi_i',
\sum_{j=1}^{N_h}
u_j \phi_j  \rangle \\
&= \sum_{i,j=1}^{N_h} f_i \overline{u}_j \langle \phi_i', \phi_j \rangle =
\sum_{i=1}^{N_h} f_i \overline{u}_i = \u^*\f.
\end{align*} 

For part (b), suppose that $\M_\tau \in \C^{N_h \times N_h}$ is the matrix
representation of the Riesz map $\tau: \V_h' \to \V_h$. With part (a) we
obtain
\begin{align*}
\u^*\f = \langle f,u \rangle &= (\tau f, u)=(\M_\tau \f, \u)_{\M} = \u^* \M
\M_\tau \f,  \text{ for all } \f,\u \in \C^{N_h},
\end{align*}
which implies $\M\M_\tau =\I$, so $\M_\tau=\M^{-1}$. For part (c), given $\f
\in
\C^{N_h}$
the
Cauchy-Schwarz
inequality in the inner product induced by $\M^{-1}$ implies that for every $
\g
\in
\C^{N_h}$:
\[ |(\f,\g)_{\M^{-1}}|  \leq \|\f\|_{\M^{-1}} \|\g\|_{\M^{-1}},  \]
with equality when $\g$ is a scalar multiple of $\f$. Therefore,
\begin{align*}
\|\f\|_{\M^{-1}} = \frac{|(\f,\f)_{\M^{-1}}|}{\|\f\|_{\M^{-1}}} &= \sup_{\g \in
\C^{N_h}} \frac{|(\f,\g)_{\M^{-1}}|}{\|\g\|_{\M^{-1}}}\\
& = \sup_{\g \in \C^{N_h}}\frac{|(\M^{-1}\g)^*\f|}{\|\g\|_{\M^{-1}}}\\
& = \sup_{\u \in \C^{N_h}}\frac{|(\M^{-1} \M \u)^*\f|}{\|\M \u \|_{\M ^{-1}}}
\\
&= \sup_{\u \in \C^{N_h}}\frac{|\u^*\f|}{\|\u \|_{\M}} =  \sup_{u \in
\V_h}\frac{|\langle f,u\rangle|}{\|u \|}.
\end{align*}
\end{proof}

The Galerkin problem in $\V_h$ takes the form 
\begin{equation}\label{eq:helmholtzgal}
\text{Find $u \in \V_h$ such that } \, a(u,v)=f(v) \text{ for all } v \in \V_h.
\end{equation}

Using the  operator $\mathcal{A}_h: \V_h \to \V_h'$ defined as in
\eqref{eq:operatorA} we see that the Galerkin problem is equivalent to finding
a
solution $u \in \V_h$ to the functional equation 
\[\mathcal{A}_h u =f|_{\V_h},\]
where the right hand side is the restriction of $f$ to $\V_h$. If $u=\Phi \u$
and $f = \Phi' \f$, we obtain the linear-algebraic formulation of the Galerkin
problem
 \begin{equation}\label{eq:helmholtzAu}
 \A \u = \f, \end{equation}

where the  matrix $\A \in \C^{N_{h}\times N_{h}}$ and the  vector $\f \in
\C^{N_{h}}$ are given by
\begin{align*}\label{eq:Au=f}\nonumber
 \A _{ij} &= a(\phi_j,\phi_{i}), \,\,\,  1 \leq i,j\leq N_h.
 \nonumber \\
  \f & = (\langle f,\phi_1\rangle , \ldots, \langle f,\phi_{N_h}\rangle )^T.
\end{align*}

In the case of Helmholtz problems with the sesquilinear form $a$ given by
\eqref{eq:helmform}, the matrix $\A$ from the linear system has the form
 \[\A = \S -k^2\M - ik\N,\]
where $\M$ is the mass matrix \eqref{eq:mass}, and $\S,\N$ are defined by 
\begin{align*}\label{eq:Au=f}\nonumber
 \S _{ij} &= (\nabla \phi_j, \nabla \phi_{i}), \,\,\,
  \N_{ij} = ( \phi_j, \phi_{i})_{L^2(\partial \Omega)}, \,\,\,  1 \leq i,j\leq
N_h.
\end{align*}

To finish this section, we discuss the approximability properties of the space
 $\V_h$. We assume that $\V_h$ is a space of  piecewise linear Lagrange
finite elements on a simplicial  mesh (triangular or tetrahedral, in 2D or 3D
respectively).  Under this assumption, the Scott-Zhang
interpolation operator  $\Pi_{SZ}:
H^1(\Omega) \to \V_h$ is well defined (see \cite[Section
1.6.2]{Ern:2013dd}).
Using the norm equivalence
\[\|u\|_{1,k,\Omega} \sim k \| u \|_{L^2(\Omega)} + \|\nabla
u\|_{L^2(\Omega)}\]
and standard interpolation estimates (see \cite[Lemma 1.130]{Ern:2013dd})
it
can be shown that the Scott-Zhang interpolation operator $\Pi_{SZ}$ has the
property that for all $w \in H^2(\Omega)$:
\begin{equation}\label{eq:szint1}
\|w-\Pi_{SZ} w\|_{1,k,\Omega} \lsim h \|w\|_{H^2(\Omega)} + hk
\|w\|_{H^1(\Omega)}
\end{equation}
 and for $w \in H^{1}(\Omega)$ 
\begin{equation}\label{eq:szint2}
\|w-\Pi_{SZ} w\|_{1,k,\Omega} \lsim (1+hk) \|w\|_{H^1(\Omega)}.
\end{equation}

\subsection{The shifted Laplace preconditioner and GMRES}
Given $\eps >0$, we consider the Helmholtz problem with absorption (or 
``shifted Laplace" problem)
\begin{equation}\label{eq:slaplace2}
\left\{
  \begin{array}{rc}
   -\Delta u -(k^2 + i \eps )u &= f_1  \,\, \text{ in $\Omega$,} \\
  \partial_n u -iku &= f_2 \,\, \text{ in $\Gamma$}.
  \end{array}
\right.
\end{equation}
with corresponding variational formulation
\begin{equation}\label{eq:helmholtzvar2}
\text{Find $u \in H^{1}(\Omega)$ such that } \, a_{\eps}(u,v)=\langle f, 
v \rangle
\text{
for all } v \in H^{1}(\Omega),
\end{equation}
with the sesquilinear form $a_\eps$ and the antilinear functional $f$ given by
\begin{align}
	a_{\eps}(u,v)&= \int_{\Omega} \nabla u \overline{\nabla v} - (k^2 + i
\eps)\int_{\Omega} u \overline{v}- ik \int_{\Gamma}u\overline{v} \\
	\langle f, v \rangle &= \int_{\Omega} f_1 \overline{v} + \int_{\Gamma}
f_2\overline{v}.
\end{align}
The next theorem summarizes the properties of the sesquilinear form $a_\eps$.
\begin{lemma} \cite[Lemma 3.1]{Gander:2015jf}  Let $a_\eps$ be the
sesquilinear form of the shifted Laplace problem \label{eq:slaplace1}. The
following properties hold: 
\begin{itemize}

\item[1.] The form $a_\eps$ is continuous, that is, if $0 < \eps \lsim k^2$
then given
$k_0>0$
there
exists
a constant
$C_c$
independent of $k, \eps$ such that  for all $k >k_0$, and $u,v \in
H^1(\Omega)$
\[|a_{\eps}(u,v)| \leq C_c\|u\|_{1,k,\Omega} \|v\|_{1,k,\Omega}.\]

\item[2.]  The  form $a_\eps$ is coercive, that is, if $0 < \eps \lsim
k^2$ there exists a constant $\alpha >0$
independent of $k,\eps$ such that for all $k > 0$ and  $u \in H^1(\Omega)$	
	\[|a_{\eps}(u,u)| \geq \alpha \frac{\eps}{k^2}\|u\|_{1,k,\Omega}^2.\]
	\end{itemize}
\end{lemma}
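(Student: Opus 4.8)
The plan is to exploit the elementary relation between $a_\eps$ and the unshifted Helmholtz form $a$, namely
\[
a_\eps(u,v) = a(u,v) - i\eps\,(u,v)_{L^2(\Omega)},
\]
so that every estimate reduces to the corresponding property of $a$ (parts (a) and (b) of the earlier lemma) plus a controlled contribution from the purely imaginary perturbation $-i\eps(u,v)_{L^2(\Omega)}$. Throughout I would repeatedly use the trivial bound $\|u\|_{L^2(\Omega)} \le k^{-1}\|u\|_{1,k,\Omega}$, which follows at once from the definition of the $k$-weighted norm, and the hypothesis $\eps \lsim k^2$, i.e. $\eps/k^2 \le C_0$ for some constant $C_0$ independent of $k$ and $\eps$ (the restriction $k>k_0$ in part 1 only serves to keep $k$ bounded away from zero and is otherwise harmless).

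For continuity (part 1), I would apply the triangle inequality to the identity above. The first term is handled by the continuity of $a$, giving $|a(u,v)| \le C_c\|u\|_{1,k,\Omega}\|v\|_{1,k,\Omega}$ with $C_c$ independent of $k$. For the perturbation, the Cauchy--Schwarz inequality in $L^2(\Omega)$ together with the bound above yields
\[
\eps\,|(u,v)_{L^2(\Omega)}| \le \eps\,\|u\|_{L^2(\Omega)}\|v\|_{L^2(\Omega)} \le \frac{\eps}{k^2}\,\|u\|_{1,k,\Omega}\|v\|_{1,k,\Omega} \le C_0\,\|u\|_{1,k,\Omega}\|v\|_{1,k,\Omega}.
\]
Adding the two contributions gives continuity with constant $C_c + C_0$, independent of $k$ and $\eps$.

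The coercivity (part 2) is the more delicate statement and the main obstacle, precisely because $a_\eps$ is indefinite: its real part $\Re a_\eps(u,u) = \Re a(u,u) = \|\nabla u\|_{L^2(\Omega)}^2 - k^2\|u\|_{L^2(\Omega)}^2$ has no sign, so one cannot argue by a Lax--Milgram-type lower bound on the real part alone. The key observation is that it is the imaginary shift that makes the form coercive \emph{in modulus}, at the price of the degradation factor $\eps/k^2$. Concretely, since the perturbation is purely imaginary one has $\Im a_\eps(u,u) = -k\|u\|_{L^2(\Gamma)}^2 - \eps\|u\|_{L^2(\Omega)}^2$, whence $\eps\|u\|_{L^2(\Omega)}^2 \le |\Im a_\eps(u,u)| \le |a_\eps(u,u)|$, a definite lower bound controlling the $L^2$ mass. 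To recover the full $k$-weighted norm I would invoke the G\aa rding equality (part (b)), which combined with $\Re a_\eps(u,u) = \Re a(u,u)$ gives
\[
\|u\|_{1,k,\Omega}^2 = \Re a_\eps(u,u) + 2k^2\|u\|_{L^2(\Omega)}^2.
\]
Multiplying by $\eps/k^2$ and estimating $\Re a_\eps(u,u) \le |a_\eps(u,u)|$ and $\eps\|u\|_{L^2(\Omega)}^2 \le |a_\eps(u,u)|$ on the right-hand side yields
\[
\frac{\eps}{k^2}\,\|u\|_{1,k,\Omega}^2 \le \Big(\frac{\eps}{k^2}+2\Big)\,|a_\eps(u,u)| \le (C_0+2)\,|a_\eps(u,u)|,
\]
so the claim holds with $\alpha = (C_0+2)^{-1}$. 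The only real subtlety is to resist bounding $\Re a_\eps$ from below (which fails in the indefinite regime) and instead to trade it, via the G\aa rding identity, for the full norm plus a definite $L^2$ term that the imaginary part controls.
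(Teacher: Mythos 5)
Your proof is correct. The paper offers no proof of this lemma --- it is imported verbatim from the cited reference \cite{Gander:2015jf} --- and your argument (splitting $a_\eps = a - i\eps(\cdot,\cdot)_{L^2(\Omega)}$, using $\|u\|_{L^2(\Omega)} \le k^{-1}\|u\|_{1,k,\Omega}$ for continuity, and for coercivity combining $|a_\eps(u,u)| \ge |\Im a_\eps(u,u)| \ge \eps\|u\|_{L^2(\Omega)}^2$ with the G\aa rding identity) is essentially the standard one given there.
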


The matrix $\A_{\eps}$ corresponding to the discrete shifted Laplace problem
has the form
\[\A_\eps= \S -(k^2+ i \eps )\M - ik\N = \A - i\eps\M.\]

We will use in our analysis a bound for the GMRES residuals based on the field
of values. Recall that given  a Hermitian positive definite (HPD) matrix $\Hh
\in
\C^{N
\times N}$ and an arbitrary  $\mathbf{C} \in \C^{N \times N}$, the
field of values of $\mathbf{C}$ in the inner product induced by $\Hh$ is the
set
\[\mathcal{F}_{\Hh}(\mathbf{C})=\left\{\frac{( \mathbf{C} \x,\x )_{\Hh}}{(\x,
\x )_{\Hh}}:  \x \in \C^{N},  \x  \neq  \mathbf{0} \right\}. \]
Note that the spectrum of $\mathbf{C}$ is contained in
$\mathcal{F}_{\Hh}(\mathbf{C})$ for any HPD matrix $\Hh$. The
Toeplitz-Hausdorff theorem states that the field of values is a convex, compact
set \cite{Horn:1994tx}, hence the following quantity is well defined:
\[\nu_{\Hh}(\mathbf{C})= \min_{z \in \mathcal{F}_{\Hh}(\mathbf{C})}|z|.
\]  
The next theorem by Elman \cite{Elman:1982vx} shows
that
quantities
related
to
the
field of values can be used to bound the residuals of a minimum residual method
in an arbitrary inner product. 

\begin{theorem} Let $\mathbf{r}_0$ be the initial residual of the GMRES method
applied to the
linear system
\[\mathbf{C} \mathbf{u}=\mathbf{f}\]	
	 in the inner product induced by $\Hh$. The $n$-th residual
$\mathbf{r}_n$ satisfies:
\begin{equation}\label{eq:elmanbound}
\frac{\| \mathbf{r}_n \|_\Hh}{\| \mathbf{r}_0\|_\Hh} \leq \left(1-
\frac{\nu_{\Hh}(\mathbf{C})^{2}}{\|\mathbf{C}\|_{\Hh}^2}\right)^{n/2},
\end{equation}
\end{theorem}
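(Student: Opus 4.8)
\medskip

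The plan is to reduce the bound to the standard GMRES optimality property in the $\Hh$-inner product and then estimate the resulting polynomial minimization by a single, cleverly chosen degree-one polynomial. First I would recall that GMRES in the inner product induced by $\Hh$ produces, at step $n$, the residual $\mathbf{r}_n = p_n(\mathbf{C}) \mathbf{r}_0$ where $p_n$ ranges over all polynomials of degree at most $n$ with $p_n(0)=1$, and that $\mathbf{r}_n$ is characterized by minimizing $\|p(\mathbf{C})\mathbf{r}_0\|_\Hh$ over this class. Hence
\[
\frac{\|\mathbf{r}_n\|_\Hh}{\|\mathbf{r}_0\|_\Hh}
\leq \min_{\substack{\deg p \leq n \\ p(0)=1}} \|p(\mathbf{C})\|_\Hh .
\]
The strategy is then to bound the right-hand side from above by testing a specific admissible polynomial rather than solving the full minimization.

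\medskip

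The key construction is to take $p(z) = (1 - z/\alpha)^n$ for a suitable real or complex scalar $\alpha$, which clearly satisfies $p(0)=1$ and has degree $n$; this gives $\|p(\mathbf{C})\|_\Hh \leq \|\mathbf{I} - \mathbf{C}/\alpha\|_\Hh^{\,n}$. The crucial step is to estimate $\|\mathbf{I} - \mathbf{C}/\alpha\|_\Hh$ in terms of the field-of-values quantities $\nu_\Hh(\mathbf{C})$ and $\|\mathbf{C}\|_\Hh$. Working in the $\Hh$-inner product, for any unit vector $\x$ one writes
\[
\|(\mathbf{I} - \mathbf{C}/\alpha)\x\|_\Hh^2
= 1 - \frac{2}{\alpha}\Re\,(\mathbf{C}\x,\x)_\Hh + \frac{1}{\alpha^2}\|\mathbf{C}\x\|_\Hh^2 ,
\]
where I take $\alpha$ real and positive. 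Bounding $\Re\,(\mathbf{C}\x,\x)_\Hh \geq \nu_\Hh(\mathbf{C})$ from below (this is where the field-of-values geometry enters, using that every element of $\F_\Hh(\mathbf{C})$ has modulus at least $\nu_\Hh(\mathbf{C})$, together with the assumption implicit in the bound that the field of values lies in the right half-plane) and $\|\mathbf{C}\x\|_\Hh^2 \leq \|\mathbf{C}\|_\Hh^2$ from above, I obtain
\[
\|(\mathbf{I} - \mathbf{C}/\alpha)\x\|_\Hh^2
\leq 1 - \frac{2\nu_\Hh(\mathbf{C})}{\alpha} + \frac{\|\mathbf{C}\|_\Hh^2}{\alpha^2}.
\]

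\medskip

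The final step is to optimize this quadratic in $1/\alpha$: the choice $\alpha = \|\mathbf{C}\|_\Hh^2 / \nu_\Hh(\mathbf{C})$ minimizes the right-hand side and yields exactly $1 - \nu_\Hh(\mathbf{C})^2 / \|\mathbf{C}\|_\Hh^2$, whose $n/2$-th power is the claimed bound after taking $n$-th powers of the norm and using $\|p(\mathbf{C})\|_\Hh \leq \sup_{\|\x\|_\Hh=1}\|(\mathbf{I}-\mathbf{C}/\alpha)\x\|_\Hh^{\,n}$. The main obstacle, and the point that deserves care, is the lower bound $\Re\,(\mathbf{C}\x,\x)_\Hh \geq \nu_\Hh(\mathbf{C})$: this is only valid when the field of values is contained in the right half-plane (so that the real part dominates the modulus in the favorable direction), which is the standing hypothesis under which the Elman bound is meaningful and under which $\nu_\Hh(\mathbf{C}) > 0$. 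I would make this positivity assumption explicit, noting that it holds in our application because the field of values of the preconditioned matrix is bounded away from the origin; with that in hand the remaining computation is the routine quadratic optimization sketched above.
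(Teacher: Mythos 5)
The paper does not prove this theorem itself --- it is quoted from Elman's work, and the surrounding text only sketches the idea (the GMRES minimization at step $j$ is restricted from the $j$-dimensional Krylov subspace to a one-dimensional one, giving a one-step contraction that is iterated). Your plan of testing the GMRES optimality property against an explicit residual polynomial is a legitimate variant of that argument, but your execution has a genuine gap at exactly the step you flag as delicate: the inequality $\Re\,(\mathbf{C}\x,\x)_{\Hh} \geq \nu_{\Hh}(\mathbf{C})$ is false in general, even when $\F_{\Hh}(\mathbf{C})$ lies in the open right half-plane. The quantity $\nu_{\Hh}(\mathbf{C})$ is the minimum \emph{modulus} over the field of values, and a point $z \in \F_{\Hh}(\mathbf{C})$ can have large modulus but arbitrarily small positive real part: if $\F_{\Hh}(\mathbf{C})$ is close to the segment from $\delta+10i$ to $\delta+11i$ with $0<\delta\ll 1$, then $\min_{z}\Re z=\delta$ while $\nu_{\Hh}(\mathbf{C})>10$. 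With a real shift $\alpha$ your computation therefore proves only the weaker, classical form of Elman's bound with $\min_{z\in\F_{\Hh}(\mathbf{C})}\Re z$ in place of $\nu_{\Hh}(\mathbf{C})$, and it genuinely requires the extra half-plane hypothesis, which the theorem as stated does not assume (it only needs $0\notin\F_{\Hh}(\mathbf{C})$ for the bound to be nontrivial).

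The repair is to allow a complex step length. Either (i) follow the route the paper alludes to: since $(1-\alpha z)p_n(z)$ is again an admissible residual polynomial, $\|\mathbf{r}_{n+1}\|_{\Hh}\leq\min_{\alpha\in\C}\|\mathbf{r}_n-\alpha\mathbf{C}\mathbf{r}_n\|_{\Hh}$, and the optimal $\alpha_n=\overline{(\mathbf{C}\mathbf{r}_n,\mathbf{r}_n)_{\Hh}}/\|\mathbf{C}\mathbf{r}_n\|_{\Hh}^2$ gives
\begin{equation*}
\|\mathbf{r}_{n+1}\|_{\Hh}^2 \leq \|\mathbf{r}_n\|_{\Hh}^2\left(1-\frac{|(\mathbf{C}\mathbf{r}_n,\mathbf{r}_n)_{\Hh}|^2}{\|\mathbf{r}_n\|_{\Hh}^2\,\|\mathbf{C}\mathbf{r}_n\|_{\Hh}^2}\right) \leq \|\mathbf{r}_n\|_{\Hh}^2\left(1-\frac{\nu_{\Hh}(\mathbf{C})^2}{\|\mathbf{C}\|_{\Hh}^2}\right),
\end{equation*}
where the modulus (not the real part) of the Rayleigh quotient appears and no half-plane assumption is needed; or (ii) keep your fixed polynomial $(\mathbf{I}-\alpha\mathbf{C})^n$ but take $\alpha=t\,\overline{z_0}$ with $t>0$ and $z_0$ the point of the convex compact set $\F_{\Hh}(\mathbf{C})$ nearest the origin, so that the supporting-hyperplane inequality $\Re(\overline{z_0}\,z)\geq|z_0|^2=\nu_{\Hh}(\mathbf{C})^2$ for all $z\in\F_{\Hh}(\mathbf{C})$ replaces your false lower bound; optimizing over $t$ (the minimizer is $t=1/\|\mathbf{C}\|_{\Hh}^2$) again yields the factor $1-\nu_{\Hh}(\mathbf{C})^2/\|\mathbf{C}\|_{\Hh}^2$. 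The rest of your write-up --- the optimality property of GMRES in the $\Hh$-inner product and the routine quadratic optimization --- is fine.
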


The proof of the GMRES bound based on the field of values relies on the fact
that
the residual minimization problem solved by a GMRES iteration in step $j$ can
be restricted from the $j$-dimensional Krylov subspace to a one-dimensional
subspace, so in general one cannot expect the bound \eqref{eq:elmanbound} to be
sharp in the intermediate steps of an iteration. Nevertheless, for
(preconditioned) linear systems that result from finite element discretizations
of PDEs, one can estimate the quantities $\nu_\D(\mathbf{C})$ and
$\|\mathbf{C}\|$ using
properties of the continuous problem and the finite element discretization, and
in this way obtain rigorous proofs of parameter-independent GMRES convergence, see, e.g.,
\cite{Benzi:2011ku,Loghin:2004cp,Starke:1997un,Gander:2015jf,Hannukainen:2011tya,Hannukainen:2013hl}.  Other convergence  bounds for GMRES based on the field of values
are surveyed in \cite{Liesen:2018us}.

We close this section by recalling the main result in \cite{Gander:2015jf}
restricted to the kind of problems and domains that we are considering here
(i.e., the interior impedance problem on
convex
polyhedral domains discretized with P1 finite elements).
We remark that the analysis in \cite{Gander:2015jf} includes also  the exterior
scattering
problem and more general domains (star-shaped
domains).

\begin{theorem}[Theorem 1.5 in \cite{Gander:2015jf}]
Let $\Omega$ be a convex polyhedron and suppose that the matrices $\A$ and
$\A_{\eps}$ result from the discretization of the Helmholtz and shifted Laplace
problems \ref{eq:helmholtz} and \ref{eq:slaplace2} with P1 finite elements on a
quasi-uniform
sequence of meshes $\{T_h\}_{h
\geq 0}$. Let $\eps \lsim k^2$, and $k_0, C>0$. Then, there exist 
constants $C_1,C_2$ (independent of $h,k, \eps$ but depending on $k_0, C$) 
such that, for
$k > k_0$ with $hk^2 \geq C$, 
\begin{align*}
\|\I-\A\A_{\eps}^{-1}\| &\leq C_1\frac{\eps}{k}\\
\|\I-\A_{\eps}^{-1}\A\| &\leq C_2\frac{\eps}{k},
\end{align*}
and the GMRES method applied to the linear systems
\begin{align*}
\A_{\eps}^{-1}\A \u = \A_{\eps}^{-1}\f,\,\,\,
 \A\A_{\eps}^{-1} \f = \g, 
\end{align*}
converges in a number of iterations independent of $k$.
\end{theorem}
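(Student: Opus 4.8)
The plan is to reduce both operator bounds to a single stability estimate for the discrete shifted-Laplace solve and then feed the result into Elman's bound~\eqref{eq:elmanbound}. First I would exploit the algebraic relation $\A_\eps=\A-i\eps\M$, which gives $\A=\A_\eps+i\eps\M$ and hence the exact identity $\I-\A\A_\eps^{-1}=-i\eps\,\M\A_\eps^{-1}$. Measuring in the $\M^{-1}$ inner product and using Proposition~\ref{thm:dual}, the quantity $\|\M\A_\eps^{-1}\|_{\M^{-1}}$ equals $\sup_f \|v\|_{L^2(\Om)}/\|f\|$, where $v\in\V_h$ solves the discrete shifted problem $a_\eps(v,w)=\langle f,w\rangle$ and $\|f\|$ is the $L^2$-dual norm represented by $\M^{-1}$. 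Thus proving $\|\I-\A\A_\eps^{-1}\|_{\M^{-1}}\le C_1\eps/k$ reduces entirely to the discrete stability estimate $\|v\|_{L^2(\Om)}\lsim k^{-1}\|f\|$, i.e.\ to showing that the discrete shifted solution operator maps the dual space into $L^2(\Om)$ with norm $\lsim 1/k$.

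Next I would establish this $1/k$ stability, beginning at the continuous level with an Aubin--Nitsche duality argument. Using the dual characterization of Proposition~\ref{thm:dual} one writes $\|u\|_{L^2(\Om)}=\sup_{\|g\|_{L^2(\Om)}=1}|(u,g)_{L^2(\Om)}|$; letting $z$ solve the adjoint shifted problem with data $g$, the sesquilinear-form identity yields $(u,g)_{L^2(\Om)}=\langle f,z\rangle\le\|f\|\,\|z\|_{L^2(\Om)}$. The decisive factor $1/k$ comes from Theorem~\ref{thm:stab}, which gives $\|z\|_{1,k,\Om}\lsim\|g\|_{L^2(\Om)}$ and therefore $\|z\|_{L^2(\Om)}\le k^{-1}\|z\|_{1,k,\Om}\lsim k^{-1}\|g\|_{L^2(\Om)}$. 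Combining, $\|u\|_{L^2(\Om)}\lsim k^{-1}\|f\|$. I would stress that plain coercivity of $a_\eps$ would only give an $O(1)$ bound here, so this duality step is indispensable for reaching the sharp $\eps/k$ estimate.

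The \emph{main obstacle} is upgrading this to the fully discrete estimate, since the duality argument must now be run with the discrete adjoint solution $z_h\in\V_h$, and one must control the Galerkin error $\|z-z_h\|_{L^2(\Om)}$. I would handle this by a Schatz-type argument: quasi-optimality of the shifted Galerkin method (coercive, uniformly for $\eps\lsim k^2$), combined with the $H^2$-regularity estimate of Theorem~\ref{thm:stab} and the Scott--Zhang interpolation bounds~\eqref{eq:szint1}--\eqref{eq:szint2}, bounds the error in terms of powers of $h$, $k$ and the data. It is precisely in keeping this error subordinate to the $k^{-1}\|f\|$ main term that the quasi-uniform mesh condition on $hk^{2}$ stated in the hypothesis is used, and verifying that the interpolation-plus-regularity contribution does not spoil the $1/k$ scaling is the technical heart of the proof. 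The companion bound $\|\I-\A_\eps^{-1}\A\|_{\M^{-1}}=\eps\|\A_\eps^{-1}\M\|_{\M^{-1}}\le C_2\eps/k$ then follows from the same reasoning applied to the adjoint shifted problem (shift $-i\eps$), which enjoys an identical stability estimate.

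Finally I would invoke Elman's bound~\eqref{eq:elmanbound} in the $\M^{-1}$ inner product. Once $\eps\lsim k$ with a small enough constant so that $\delta:=C_1\eps/k<1$, the bound $\|\I-\A\A_\eps^{-1}\|_{\M^{-1}}\le\delta$ confines the field of values $\F_{\M^{-1}}(\A\A_\eps^{-1})$ to the closed disk of radius $\delta$ about $1$; hence $\nu_{\M^{-1}}(\A\A_\eps^{-1})\ge 1-\delta>0$ and $\|\A\A_\eps^{-1}\|_{\M^{-1}}\le 1+\delta$, both independent of $k$. Substituting into~\eqref{eq:elmanbound} gives a contraction factor bounded away from $1$ uniformly in $k$, so the GMRES iteration count is $k$-independent, and the same holds for $\A_\eps^{-1}\A$. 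Note that the operator estimates hold for $\eps\lsim k^2$, whereas the convergence conclusion needs the smaller shift $\eps\lsim k$ in order to force $\delta<1$; this is exactly the gap that the later two-level construction is designed to close.
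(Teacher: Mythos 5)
The paper itself gives no proof of this statement: it is quoted (as Theorem~1.5) from \cite{Gander:2015jf}, so there is no internal argument to compare yours against. Judged against the proof in that reference, your outline follows essentially the same route: the identity $\I-\A\A_{\eps}^{-1}=-i\eps\,\M\A_{\eps}^{-1}$, the reduction to an $O(k^{-1})$ bound on the discrete shifted solution operator measured from the $L^2$-dual space into $L^2$, a duality/Schatz argument to transfer the continuous stability estimate to the Galerkin solution under the mesh condition $hk^{2}\lsim 1$ (the inequality $hk^{2}\geq C$ in the theorem as printed is evidently a typo for $hk^{2}\leq C$), and finally Elman's estimate once $\eps/k$ is small. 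Your closing observation --- that the norm bounds hold for $\eps\lsim k^{2}$ while the GMRES conclusion genuinely requires $\eps\lsim k$ with a small constant --- is precisely the point made in \cite{Gander:2015jf} and is the motivation for the present paper.

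Two caveats. First, your duality step invokes Theorem~\ref{thm:stab}, which is stated here only for the \emph{unshifted} Helmholtz problem; what you actually need is the analogous stability estimate for the problem with absorption and its adjoint, uniform in $0<\eps\lsim k^{2}$. That estimate is true (absorption only helps) and is proved in \cite{Gander:2015jf}, but it is an extra input rather than a consequence of anything available in this paper. Second, the step you yourself flag as the technical heart --- showing that the Galerkin error of the discrete adjoint solution stays subordinate to the $k^{-1}$ leading term under $hk^{2}\leq C$ --- is only named, not carried out, so as written the proposal is a faithful road map of the cited proof rather than a self-contained argument. A smaller point: you work exclusively in the $\M^{-1}$ inner product, whereas the theorem is stated for an unspecified (presumably Euclidean) norm; passing between the two requires the quasi-uniformity norm equivalence $\|\cdot\|_{\M^{-1}}\sim h^{-d/2}\|\cdot\|_{\I}$, exactly as the paper does later in Corollary~\ref{thm:corollarynorm}.
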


\section{A two-level preconditioner for the Helmholtz equation based on the
shifted Laplacian}

In order to introduce the two-level preconditioner for the Helmholtz problem,
we first 
review the basics of the multigrid method for finite element problems,
following the presentation in \cite{Braess:2007wm}. Let $\V_H \subset \V_h$ be
a subspace of finite element functions of dimension $N_H$, corresponding to a
coarse grid $T_H \subset T_h$. We denote by $\Phi_H$ and $\Phi_H'$ the
coordinate mappings for $\V_H$ and $\V_H'$. Since
$\V_H \subset \V_h $  and $\V_h' \subset \V_H',$
the prolongation and restriction  operators $P: \V_H \to \V_h$ and $R: \V_h'
\to \V_H'$  can be defined trivially, that is, $P v = v$ for  $v \in \V_H$ and
$R f = f|_{\V_H}$ for $v \in \V_h'$.  Moreover, the coordinate mappings
satisfy
\[\C^{N_H} \overset{ \Phi_H}{\longrightarrow}\V_H \subset \V_h \overset{
\Phi_h^{-1}}{\longrightarrow} \C^{N_h},  \text{ and } \, \, \C^{N_h} \overset{
\Phi_h'}{\longrightarrow} \V_h' \subset \V_H' \overset{
\Phi_H^{'-1}}{\longrightarrow} \C^{N_h},\]
and this gives the matrix form of the prolongation and restriction operators:
\[\P = \Phi_h^{-1} \circ \Phi_H \in \C^{N_h\times N_H} \, \text{ and } \, \R =
(\Phi_H')^{-1}  \circ \Phi_h' \in \C^{N_H\times {N_h}}.\] 

 For $v_H \in \V_H$ and $f \in
\V_h'$ we have
\begin{equation}\label{eq:adj}
\langle f, P v_H \rangle = \langle f,v_H \rangle = \langle Rf, v_H \rangle,
\end{equation}
combining this relation and part (a) of Proposition \ref{thm:dual} we conclude that
 the matrix form of the restriction operator is the Hermitian transpose of
the prolongation: $\R=\P^*$. The Galerkin coarse grid matrix is defined as
\[\A_H = \R \A \P = \P^*\A \P \in \C^{N_{H} \times N_H}.\]
Using the definition of the prolongation and restriction operators, it can be
shown that $\A_H$ corresponds to the Galerkin operator in the coarse space
$\V_H$ (Lemma 9.1 in \cite{Cocquet:2017jw}). The two-grid preconditioner
$\B_{\eps}$
that we will study has the matrix form
\[ \B_{\eps} = \A_{\eps}^{-1}(\I-\A \P \A_H^{-1}\P^*) + \P \A_H^{-1}\P^*.
\]
where $\A_\eps$ is the discrete shifted Laplacian in the space $\V_h$. Note
that it is not immediate that the preconditioner is non-singular, but this is in fact the case since it has been
shown in \cite{GarciaRamos:2020fu} that a two-level preconditioner of the form
is non-singular if and only if the matrix $\P^*\A_{\eps} \P$ is non-singular,
and this holds  because of the coercivity of $a_{\eps}(\cdot,\cdot)$. We can
now state our main result.
\begin{theorem}\label{thm:main} There exists a constant $C>0$ depending only on
the
domain
$\Omega$ such that if the coarse grid size $H$ satisfies $Hk^{2} <C$ the GMRES
method in the inner product induced by the inverse mass matrix $\M^{-1}$ \eqref{eq:invmass} applied to the preconditioned
system
\begin{equation}
	\A\B_{\eps}\g=\f,
\end{equation}
converges in a number of iterations independent of the wavenumber $k$.
\end{theorem}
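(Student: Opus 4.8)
The plan is to invoke Elman's bound \eqref{eq:elmanbound} with $\Hh=\M^{-1}$, reducing everything to showing that, for $\eps\sim k^2$ and $Hk^2<C$ small, the quantity $\nu_{\M^{-1}}(\A\B_\eps)$ is bounded below and $\|\A\B_\eps\|_{\M^{-1}}$ is bounded above by constants independent of $k$. The entry point is the algebraic identity
\[ \I-\A\B_\eps=(\I-\A\A_\eps^{-1})(\I-\Q),\qquad \Q=\A\P\A_H^{-1}\P^*, \]
which follows by direct computation once one checks that $\Q$ is a projection (using $\P^*\A\P=\A_H$). Setting $\mathbf{E}\coloneq(\I-\A\A_\eps^{-1})(\I-\Q)$, if I can prove $\|\mathbf{E}\|_{\M^{-1}}\le\delta<1$ then both requirements follow simultaneously: $\|\A\B_\eps\|_{\M^{-1}}\le 1+\delta$, while $\Re(\A\B_\eps\x,\x)_{\M^{-1}}=\|\x\|_{\M^{-1}}^2-\Re(\mathbf{E}\x,\x)_{\M^{-1}}\ge(1-\delta)\|\x\|_{\M^{-1}}^2$ forces $\nu_{\M^{-1}}(\A\B_\eps)\ge 1-\delta$. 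Thus the whole theorem collapses to the single estimate $\|\mathbf{E}\|_{\M^{-1}}\lesssim\eps H$.

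To estimate $\mathbf{E}$ I would pass to the function-space picture, identifying $\C^{N_h}$ with $\V_h'$ and, by Proposition \ref{thm:dual}(c), $\|\cdot\|_{\M^{-1}}$ with the dual norm $\|\cdot\|_{\V_h'}$. Two structural facts drive the argument. First, $(\I-\Q)$ corresponds on $\V_h'$ to $f\mapsto f-\cA_h u_H$, where $u_H\in\V_H$ solves the coarse (unshifted) Galerkin Helmholtz problem with data $f$; a short computation shows its range is exactly the annihilator $\{g\in\V_h':\langle g,v_H\rangle=0\ \forall v_H\in\V_H\}$. Second, applying $(\I-\A\A_\eps^{-1})$ to such a $g$ and writing $u_\eps=\cA_{\eps,h}^{-1}g\in\V_h$, the residual $g-\cA_h u_\eps$ is the functional $v\mapsto a_\eps(u_\eps,v)-a(u_\eps,v)=-i\eps(u_\eps,v)_{L^2(\Omega)}$, whose dual norm equals $\eps\|u_\eps\|_{L^2(\Omega)}$. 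Hence $\|\mathbf{E}f\|_{\V_h'}=\eps\|u_\eps\|_{L^2(\Omega)}$, and the task becomes to prove $\eps\|u_\eps\|_{L^2(\Omega)}\lesssim\eps H\,\|f\|_{\V_h'}$.

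This I would do in two steps. For the energy norm, test $a_\eps(u_\eps,u_\eps)=\langle g,u_\eps\rangle=\langle f,u_\eps\rangle-a(u_H,u_\eps)$ and use coercivity of $a_\eps$ (constant $\sim\eps/k^2$), continuity of $a$, the coarse-grid Helmholtz stability $\|u_H\|_{1,k,\Omega}\lesssim\|f\|_{\V_h'}$, and $\|u_\eps\|_{L^2(\Omega)}\le k^{-1}\|u_\eps\|_{1,k,\Omega}$, to obtain $\|u_\eps\|_{1,k,\Omega}\lesssim(k^2/\eps)\|f\|_{\V_h'}\sim\|f\|_{\V_h'}$. For the $L^2$ norm I would run an Aubin--Nitsche duality: for $\psi\in L^2(\Omega)$ let $\tilde z$ solve the continuous adjoint shifted problem $a_\eps(v,\tilde z)=(v,\psi)_{L^2(\Omega)}$, so that $(u_\eps,\psi)_{L^2(\Omega)}=a_\eps(u_\eps,\tilde z)$; since $\Pi_{SZ}^{H}\tilde z\in\V_H$ and $g$ annihilates $\V_H$, the coarse part drops out, leaving
\[ (u_\eps,\psi)_{L^2(\Omega)}=a_\eps\!\left(u_\eps,\tilde z-\Pi_{SZ}^{H}\tilde z\right)\le C_c\,\|u_\eps\|_{1,k,\Omega}\,\bigl\|\tilde z-\Pi_{SZ}^{H}\tilde z\bigr\|_{1,k,\Omega}. \]
Feeding the coarse Scott--Zhang estimate \eqref{eq:szint1} the regularity bounds $\|\tilde z\|_{1,k,\Omega}\lesssim k^{-1}\|\psi\|_{L^2(\Omega)}$ (coercivity) and $\|\tilde z\|_{H^2(\Omega)}\lesssim\|\psi\|_{L^2(\Omega)}$ (elliptic regularity on the convex $\Omega$, cf.\ Theorem \ref{thm:stab}, specialized to the shifted problem with $\eps\sim k^2$) gives $\|\tilde z-\Pi_{SZ}^{H}\tilde z\|_{1,k,\Omega}\lesssim H\|\psi\|_{L^2(\Omega)}$, since both terms $H\|\tilde z\|_{H^2(\Omega)}$ and $Hk\|\tilde z\|_{H^1(\Omega)}$ scale like $H\|\psi\|_{L^2(\Omega)}$. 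Taking the supremum over $\psi$ yields $\|u_\eps\|_{L^2(\Omega)}\lesssim H\|f\|_{\V_h'}$, hence $\|\mathbf{E}\|_{\M^{-1}}\lesssim\eps H\sim k^2H$, which is below $1$ once $Hk^2<C$ with $C$ chosen small.

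I expect the duality step to be the main obstacle, specifically establishing the $H^2$ regularity of the continuous adjoint shifted-Laplace solution with a $k$-uniform constant: it is precisely the cancellation between the $\sim\eps/k^2$ coercivity constant and this $H^2$ bound that converts the Scott--Zhang factor into the sharp power $Hk^2$ (rather than $Hk$ or $Hk^3$). A secondary but genuine technical point is the coarse-grid Helmholtz stability $\|u_H\|_{1,k,\Omega}\lesssim\|f\|_{\V_h'}$ used above, i.e.\ the $k$-uniform well-posedness of the \emph{unshifted} Galerkin problem on $\V_H$ (equivalently, invertibility of $\A_H=\P^*\A\P$ so that $\I-\Q$ is well defined); this is where one must use that $Hk^2<C$ places the coarse mesh well inside the asymptotic regime.
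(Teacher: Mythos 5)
Your proposal is correct in outline and reaches the same key estimate as the paper, but by a genuinely different route. The algebraic starting point is in fact identical: your factorization $\I-\A\B_\eps=(\I-\A\A_\eps^{-1})(\I-\Q)=-i\eps\M\A_\eps^{-1}(\I-\Q)$ is the same identity that the paper packages additively as $\A\B_\eps=\I+i\eps\hat{\J}$ via the operators $Q,N,J$ of Proposition \ref{thm:twogridgalerkin} and Proposition \ref{thm:properties}(c), and both arguments reduce the theorem to showing $\eps\|\M\A_\eps^{-1}(\I-\Q)\|_{\M^{-1}}\lsim Hk^2$ (your observation that a bound $\delta<1$ on this norm controls $\nu_{\M^{-1}}$ and $\|\A\B_\eps\|_{\M^{-1}}$ simultaneously is a slightly cleaner endgame than the paper's use of the inequality $\|\cdot\|\le 2\max_{z\in\F}|z|$). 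Where you genuinely diverge is in how that norm is estimated. The paper runs the Schatz duality against the \emph{unshifted} continuous problem (Lemma \ref{thm:L2bound}, gaining a factor $Hk$ from Theorem \ref{thm:stab}, whose $H^2$ bound carries a factor $1+k$), then needs coercivity of $a$ on $\mathrm{ran}(I-N)$ and stability of $I-N$ before assembling Lemma \ref{thm:lemmaJ3}; you instead exploit that $(\I-\Q)f$ annihilates $\V_H$, so that $u_\eps$ is $a_\eps$-orthogonal to $\V_H$, and run Aubin--Nitsche against the \emph{shifted} adjoint problem, gaining the sharper factor $H$ because the absorbing problem has $k$-uniform $H^2$ regularity. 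The trade-off is exactly the two debts you flag: you must prove the $k$-uniform $H^2$ bound for the shifted adjoint on the convex domain (true, and available in the style of \cite{Gander:2015jf}, but not contained in Theorem \ref{thm:stab} as stated), and you must prove $k$-uniform stability of the \emph{unshifted} coarse Galerkin problem under $Hk^2<C$ --- be aware that this second item is not a side condition you can wave at: it is itself a Schatz-type duality argument, essentially the content of the paper's Lemmas \ref{thm:L2bound}--\ref{thm:lemmaJ2} in disguise, so it must be written out (or cited) with the same care. With those two ingredients supplied, your chain $\|u_\eps\|_{L^2(\Omega)}\lsim H\|u_\eps\|_{1,k,\Omega}\lsim H\|f\|_{\V_h'}$ and hence $\|\I-\A\B_\eps\|_{\M^{-1}}\lsim\eps H\sim Hk^2$ is sound and delivers the theorem.
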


\textbf{Outline of the proof of Theorem \ref{thm:main}}:
To prove Theorem \ref{thm:main} we will show that for a sufficiently small $H$
the field of values $\mathcal{F}_{\M^{-1}}(\A\B_{\eps})$ is contained in a
circle centered at $1$ with radius independent of the wavenumber $k$.  The norm
induced by
$\M^{-1}$ is a natural norm to measure the residuals of
the preconditioned system since it corresponds to the dual $L^2$ norm when
$\C^{N_h}$ is identified with the space of coordinates of $\V_h'$ (see
Proposition \ref{thm:dual}, (c)).  Recalling that $\M^{-1}$ is the matrix
representation of the $L^2$ Riesz map
(Proposition \ref{thm:dual}, (b)), we let $\mathbf{\hat{u}} = \M^{-1}\g$,
$\hat{\A}_{\eps}=\A \B_{\eps} \M$, and write the preconditioned system
\begin{equation*}
\A\B_{\eps}\g=\f,
\end{equation*}
as the equivalent system
\begin{equation}\label{eq:precgalerkin}
\hat \A_{\eps} \hat{\u} = \f.
\end{equation}

The latter linear system  encodes a functional equation \[\mathcal{\hat 
A}_{\eps}\hat{u} = f,\] where $\hat{u} \in \V_h$ and $f \in \V_h'$. This form
allows
us to formulate the preconditioned problem as a variational problem in $\V_h$.
The proof of Theorem \ref{thm:main} is divided in several parts:
\begin{enumerate}
 \item[1.]  First, we show in proposition \ref{thm:twogridgalerkin} that the
linear
system of equations \eqref{eq:precgalerkin} corresponds to the  formulation of
a variational problem in the space $\V_h$ for a certain sesquilinear form
$\hat{a}_{\eps}(\cdot, \cdot)$. 

\item[2.] Next, we prove in Lemma \ref{thm:Ahat} that the form $\hat{a}_{\eps}$
and the field of values  $\mathcal{F}_{\M^{-1}}(\A\B_{\eps})$  are related by
 \begin{equation}\label{eq:foveq}
	\mathcal{F}_{\M^{-1}}(\mathbf{A}\B_{\eps}) = 
\left\{\frac{\hat{a}_{\eps}(u,u)}{(u,u)}: u \in \V_h \setminus \{0\} \right
\}.
\end{equation}

\item[3.] 
In Lemma \ref{thm:properties} we establish some properties of the sesquilinear
form $\hat{a}_\eps$. In particular, we show that there exists an operator
$J:\V_h \to \V_h$ such that \[\frac{\hat{a}_{\eps}(u,u)}{(u,u)}= 1 + i \eps
\frac{(Ju,u)}{(u,u)},\]
for all $u \in \V_h$, thus reducing the analysis of the field of values 
\eqref{eq:foveq} to that of the field of values of the operator $J$ in the
$L^2$ inner product.

\item[4.] Lemmas \ref{thm:L2bound} and \ref{thm:coerc} 
show that the form $a(\cdot,\cdot)$ is coercive when restricted to a subspace
of $\V_h$.
Here we use a variation of a duality argument due to Schatz
\cite{Schatz:1974ea}, who used it to prove existence and quasioptimality
of the Galerkin solution of a variational problem with a sesquilinear form that
satisfies a G\aa rding inequality. This argument has appeared in several forms
in the literature of finite element analysis of Helmholtz problems (see, e.g.,
\cite{Sauter:2006fz},  \cite{Esterhazy:2011js} and the references therein), and
has been used before also in the analysis of two-grid  methods for indefinite
problems
(see \cite{Hannukainen:2013hl, Cai:1992ba}).

\item[5.] Finally, we use the coercivity result for $a(\cdot,\cdot)$ in Lemma
\ref{thm:lemmaJ2} and the coercivity of $a_{\eps}(\cdot,\cdot)$ in Lemma
\ref{thm:lemmaJ3}
to give estimates that can be combined to bound the norm of $\eps J$
independently of the wavenumber, under the restriction that $H k^2 \leq C$ for
a constant $C$ independent of $k$. 
\end{enumerate}

To proceed with the first step we formulate the preconditioned problem as a
variational problem in $\V_h$
using the strategy from \cite{Hannukainen:2013hl}, outlined in the next two
propositions.

\begin{proposition}\label{thm:twogridgalerkin}
Let $a$ be the sesquilinear form from the Helmholtz problem defined in
\eqref{eq:helmform}, let $\tau: \V_h' \to ~ \V_h$ the $L^2$-Riesz map, and $P:
\V_H
\to \V_h$,  ~$R:\V_h' \to \V_H'$ be the prolongation and restriction operators
respectively. The following statements hold:
\begin{enumerate}
\item Let $Q: \V_h \to \V_H$ be the solution operator to the problem:  For
$u
\in \V_h$ find $Qu \in \V_H$ such that
\begin{equation}\label{eq:Q}  a(Qu,v)  =\langle R\tau^{-1} u,v \rangle  \text{
for all $v \in \V_H$},\end{equation}
then, the matrix form of the operator $Q: \V_h \to \V_h$ is
\begin{equation}
\Q = \P\A_H^{-1}\P^*\M \in \C^{N_h \times N_h} \end{equation}

\item  Let $N: \V_h \to \V_H$ be the solution operator to the adjoint-type
problem: For $u \in \V_h$ find  $Nu \in \V_H$ such that
\begin{equation}\label{eq:N} 
a(v,Nu) = a(v,u) \text{ for all $v \in \V_H$,}
\end{equation}
then, the matrix form of the operator $N:\V_h \to \V_h$ is
\[ \N = \P \A_H^{-*} \P^* \A^*. \]

\item Let $J: \V_h  \to \V_h$ be the solution operator to the problem: For
$u\in \V_h$ find $Ju \in \V_h$ such that for all $v \in \V_h$
\begin{equation}\label{eq:J}
a_{\eps}(Ju,v) = \langle \tau^{-1} u, (I-N)v \rangle,	\end{equation} 
then, the matrix form of the operator $J$ is
\[\J = \A_{\eps}^{-1}(\I-\N^*)\M= \A_{\eps}^{-1} (\I- \A \P \A_H^{-1}\P^* ) \M.
\]
\end{enumerate}
\end{proposition}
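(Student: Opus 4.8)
The plan is to treat the three parts uniformly: each defines a solution operator by a variational equation that is required to hold for every test function, and in each case I would pass to coordinates and read off the matrix by using that the resulting identity holds for all test vectors. Throughout I rely on the coordinate dictionary already assembled in the excerpt: $a(u,v)=\vc^*\A\u$ and $a_{\eps}(u,v)=\vc^*\A_\eps\u$ whenever $u=\Phi_h\u$, $v=\Phi_h\vc$ (a direct consequence of $\A_{ij}=a(\phi_j,\phi_i)$); the duality pairing is $\langle g,w\rangle=\w^*\g$ in coordinates (Proposition \ref{thm:dual}(a), and its coarse-space analogue for $\V_H$); the Riesz map $\tau$ has matrix $\M^{-1}$, so $\tau^{-1}$ has matrix $\M$ (Proposition \ref{thm:dual}(b)); and $P,R$ have matrices $\P$ and $\R=\P^*$. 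The one structural point to keep in mind in (a) and (b) is that $Q$ and $N$ take values in $\V_H$: if $Qu$ has coarse coordinates $\w\in\C^{N_H}$, then its fine coordinates are $\P\w$, so the matrix $\Q\in\C^{N_h\times N_h}$ I am after is characterized by $\Q\u=\P\w$.

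For part (a) I would write $Qu=\Phi_H\w$ and test against $v=\Phi_H\mathbf{v}_H$ for arbitrary $\mathbf{v}_H\in\C^{N_H}$. The left-hand side becomes $a(Qu,v)=(\P\mathbf{v}_H)^*\A(\P\w)=\mathbf{v}_H^*\A_H\w$ using $\A_H=\P^*\A\P$. On the right-hand side, $\tau^{-1}u$ has fine dual coordinates $\M\u$, so $R\tau^{-1}u$ has coarse coordinates $\P^*\M\u$, and the coarse-space pairing gives $\langle R\tau^{-1}u,v\rangle=\mathbf{v}_H^*\P^*\M\u$. Since this holds for all $\mathbf{v}_H$, I obtain $\A_H\w=\P^*\M\u$, hence $\w=\A_H^{-1}\P^*\M\u$ and $\Q\u=\P\w=\P\A_H^{-1}\P^*\M\u$, as claimed.

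Part (b) is the transpose analogue: writing $Nu=\Phi_H\mathbf{n}$ and testing $a(v,Nu)=a(v,u)$ against $v=\Phi_H\mathbf{v}_H$ yields $\mathbf{n}^*\A_H\mathbf{v}_H=\u^*\A\P\mathbf{v}_H$ for all $\mathbf{v}_H$, and taking conjugate transposes gives $\A_H^*\mathbf{n}=\P^*\A^*\u$, so $\N=\P\A_H^{-*}\P^*\A^*$. Part (c) is similar, except the test functions range over all of $\V_h$ and the output already lives in $\V_h$, so no coarse/fine conversion of the output is needed: testing $a_{\eps}(Ju,v)=\langle\tau^{-1}u,(I-N)v\rangle$ against $v=\Phi_h\vc$ gives $\vc^*\A_\eps\J\u=((\I-\N)\vc)^*\M\u=\vc^*(\I-\N^*)\M\u$ for all $\vc$, whence $\A_\eps\J=(\I-\N^*)\M$. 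Finally $\N^*=\A\P\A_H^{-1}\P^*$, using $(\A_H^{-*})^*=\A_H^{-1}$, which produces the second stated form of $\J$.

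The computations are routine, so the \emph{main obstacle}, such as it is, is the sesquilinear bookkeeping: tracking which argument of $a(\cdot,\cdot)$ and of $\langle\cdot,\cdot\rangle$ is conjugated, since a misplaced conjugate would turn $\A_H^{-*}$ into $\A_H^{-1}$ in (b) or spoil $\N^*=\A\P\A_H^{-1}\P^*$ in (c). I would guard against this with the consistency check that $\Q$ and $\J$ recombine into the stated preconditioner, namely $\Q+\J=\B_\eps\M$, which is exactly the identity needed later when $\hat\A_\eps=\A\B_\eps\M$ is analyzed.
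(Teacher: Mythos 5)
Your proof is correct and follows essentially the same route as the paper's: pass to coordinates via $\Phi_h,\Phi_H$, use $\A_H=\P^*\A\P$, $\R=\P^*$ and the matrix $\M$ of $\tau^{-1}$, and read off each operator from the identity holding for all test vectors. (Your intermediate expression $((\I-\N)\vc)^*\M\u$ in part (c) is in fact the correct one; the paper's displayed version contains a small typo there, and your closing consistency check $\Q+\J=\B_\eps\M$ is a nice addition.)
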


\begin{proof} To prove (a), let $u \in \V_h$, $\u = \Phi_h^{-1}u
\in \C^{N_h}$ and $\w = \Phi_H^{-1}Qu \in \C^{N_H}$, recalling that $\M$ is the
matrix representation of the inverse Riesz map $\tau^{-1}$ we have that the
condition \eqref{eq:Q} is equivalent to
\[ \vc^* \A _H \w  = \vc^*\P^*\M\u  \text{ for all }\vc \in \C^{N_H},\]
this gives $\w = \A_H^{-1}\P\M\u$ and $\Q\u = \P \w = \P \A_H^{-1}\P\M\u$.

To show (b), let $u \in \V_h$, $\u = \Phi_h^{-1}u \in \C^{N_h}$ and $\tilde{\w}
= \Phi_H^{-1}Nu \in \C^{N_H}$. Then condition \eqref{eq:N} is equivalent to
\[ \tilde \w^*\A _H \vc = \u^*\A \P \vc  \text{ for all }\vc \in \C^{N_H},\]
Therefore $\A_H^* \tilde \w = \P^* \A^* \tilde \u$, so $\tilde \w = \A_H^{-*}
\P^* \A^{*} \u$ and $\N \u = \P \tilde \w = \P \A_H^{-*} \P^* \A^* \u$.

To prove (c), let $u \in \V_h$, $\u = \Phi_h^{-1}u \in \C^{N_h}$ and $\J\u =
\Phi_h^{-1}Ju \in \C^{N_h}$. Using the results of parts (a) and (b), we see
that \eqref{eq:J} is equivalent to
\[ \vc^*\A_{\eps} \J \u = ((\I-\N^*)\vc)^*\M\u = \vc^*(\I-\N^*)\M\u \text{ for
all }\vc \in \C^{N_h},\]
hence $\J\u = \A_{\eps}^{-1}(\I-\N^*)\M \u= \A_{\eps}^{-1}(\I-\A \P \A_{H}^{-1}
\P^*)\M \u$.
\end{proof}

\begin{lemma} \label{thm:Ahat}
Let $\A, \B_{\eps}, \M$ be the discrete Helmholtz operator, the two-grid
preconditioner and the mass matrix in $\V_h$ respectively, and define $\hat
\A_{\eps} = \A \B_{\eps} \M$.  The following properties hold:
\begin{enumerate}
\item If $\hat{a}_\eps: \V_h \times \V_h \to \C$ is defined as
\begin{equation}
\hat{a}_{\eps}(u,v)= a((J+Q)u,v),
\end{equation}
then, for $\u, \vc \in \C^{N_h}$ and $u = \Phi_h \u, v= \Phi_h \vc \in \V_h$:
\[ \u^*\hat{\A}_{\eps}\vc = \hat{a}_{\eps}(u,v).\]

\item Given $f \in  \V_h'$, consider the problem: 
\begin{equation}\label{eq:precgal} \text{Find $u \in \V_h$ such that } \hat
a(u,v) = \langle f,v \rangle \text{ for all } v \in \V_h.\end{equation}
Then, the preconditioned system
$\hat{\A}_{\eps} \u = \f$ is the linear algebraic formulation of
\eqref{eq:precgal}.
\end{enumerate}	
\end{lemma}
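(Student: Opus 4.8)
The plan is to prove part (a) by a direct matrix computation identifying $\hat\A_\eps$ with $\A(\J+\Q)$, and then to read off both the form identity of (a) and the variational characterization of (b) from the dictionary between matrices, operators and sesquilinear forms already recorded in Propositions \ref{thm:twogridgalerkin} and \ref{thm:dual}.

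First I would expand $\hat\A_\eps = \A\B_\eps\M$ by inserting the explicit expression for the two-grid preconditioner, obtaining
\begin{equation*}
\hat\A_\eps = \A\,\bigl[\A_\eps^{-1}(\I-\A\P\A_H^{-1}\P^*) + \P\A_H^{-1}\P^*\bigr]\M = \A\,\A_\eps^{-1}(\I-\A\P\A_H^{-1}\P^*)\M + \A\,\P\A_H^{-1}\P^*\M.
\end{equation*}
The key observation is that the factor multiplied on the left by $\A$ in each summand is exactly one of the matrix representations computed earlier: by Proposition \ref{thm:twogridgalerkin}(c) the first summand equals $\A\J$, since $\J = \A_\eps^{-1}(\I-\A\P\A_H^{-1}\P^*)\M$, and by Proposition \ref{thm:twogridgalerkin}(a) the second equals $\A\Q$, since $\Q = \P\A_H^{-1}\P^*\M$. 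Hence $\hat\A_\eps = \A(\J+\Q)$.

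Next I would transcribe this matrix identity into the language of forms. Since $\A$ is the matrix of $a$, i.e. $a(\Phi_h\w,\Phi_h\vc) = \vc^*\A\w$ for all coordinate vectors, and since $\J+\Q$ represents the operator $J+Q$, taking $\w = (\J+\Q)\u$ gives
\begin{equation*}
\hat a_\eps(u,v) = a\bigl((J+Q)u,v\bigr) = \vc^*\A(\J+\Q)\u = \vc^*\hat\A_\eps\u,
\end{equation*}
which is precisely the assertion that $\hat\A_\eps$ represents the form $\hat a_\eps$ in the pairing of Proposition \ref{thm:dual}(a); this is (a). For (b) I would combine (a) with part (a) of Proposition \ref{thm:dual}, which represents the duality pairing by the Euclidean product: the variational equation $\hat a_\eps(u,v) = \langle f,v\rangle$ for all $v\in\V_h$ becomes $\vc^*\hat\A_\eps\u = \vc^*\f$ for all $\vc\in\C^{N_h}$, and since $v$ ranges over all of $\V_h$ the test vector $\vc$ ranges over all of $\C^{N_h}$, so this is equivalent to $\hat\A_\eps\u = \f$.

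The step that requires the most care is the bookkeeping of conjugate transposes together with the sesquilinearity convention, so that the matrix identity translates into the form identity with the operator $J+Q$ itself (not an adjoint) acting on the first argument of $a$ and the conjugated argument in the correct slot; one must also keep in mind that the coarse matrix $\A_H = \P^*\A\P$ is non-Hermitian because $\A$ is. Once the conventions fixed in Propositions \ref{thm:twogridgalerkin} and \ref{thm:dual} are respected, the remainder is routine matrix algebra.
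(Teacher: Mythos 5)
Your proof is correct and follows exactly the route the paper intends: the paper's own proof of this lemma is the two-sentence remark that part (a) follows from the definition of $\hat\A_{\eps}$ together with the matrix representations of $J$ and $Q$ from Proposition \ref{thm:twogridgalerkin}, and that (b) is a consequence of (a) --- you have simply written out the computation $\hat\A_{\eps}=\A(\J+\Q)$ and its translation into the form/duality language that the paper leaves implicit. One small remark: with the paper's convention $\A_{ij}=a(\phi_j,\phi_i)$ your identity $\hat a_{\eps}(u,v)=\vc^*\hat\A_{\eps}\u$ (linear in $\u$, conjugate-linear in $\vc$) is the consistent one, so the order $\u^*\hat{\A}_{\eps}\vc$ in the lemma statement appears to be a typo; on the diagonal $u=v$, which is all that is used in Lemma \ref{thm:ahat}, the two expressions coincide.
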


\begin{proof} Part (a) follows from the definition of $\hat \A_{\eps}$ and the
matrix representations of $J$ and $Q$ given in the previous proposition. Part
(b) is a straightforward consequence of (a).
\end{proof}

\begin{lemma}\label{thm:ahat}
Let $\A \in \C^{N_h}$ be the discrete Helmholtz operator of the Galerkin
problem in $\V_h$, $\M \in \C^{N_h} $ the mass matrix for the finite element
space $\V_h$  and $\A_{\eps} \in \C^{N_h}$ the discrete shifted Laplacian. Let
$\B_{\eps}$ be the two-grid preconditioner defined by
\[\B_{\eps} = \A_{\eps}^{-1}(\I-\A \P \A_H^{-1}\P^*) + \P \A_H^{-1}\P^*,\]
and $\hat a_\eps$ the sesquilinear form defined in Proposition \ref{thm:ahat}
(a). Then, the  field of values of $\A \B_\eps$ in the inner product induced by
$\M^{-1}$ is 
\begin{align}\label{eq:fova}
\mathcal{F}_{\M^{-1}}(\mathbf{AB}_\eps) &= 
\left\{\frac{\hat{a}_{\eps}(u,u)}{(u,u)}: 0 \neq u \in \V_h \right \}.
\end{align}
\end{lemma}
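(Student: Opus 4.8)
The plan is to compute the field of values directly from its definition and reduce it to the Rayleigh-quotient form of $\hat{a}_{\eps}$ by a single change of variables. Recalling the definition of the field of values in the $\M^{-1}$ inner product,
\[
\mathcal{F}_{\M^{-1}}(\A\B_{\eps}) = \left\{\frac{(\A\B_{\eps}\x,\x)_{\M^{-1}}}{(\x,\x)_{\M^{-1}}} : 0 \neq \x \in \C^{N_h}\right\},
\]
the first step I would take is to substitute $\x = \M\u$. Since $\M$ is HPD and hence invertible, this is a bijection of $\C^{N_h}\setminus\{0\}$ onto itself. Using the Hermitian symmetry $\M^*=\M$ and $\M^{-1}\M = \I$, the numerator transforms as
\[
(\A\B_{\eps}\x,\x)_{\M^{-1}} = \x^*\M^{-1}\A\B_{\eps}\x = \u^*\M\M^{-1}\A\B_{\eps}\M\u = \u^*(\A\B_{\eps}\M)\u = \u^*\hat{\A}_{\eps}\u,
\]
where the last equality is just the definition $\hat{\A}_{\eps} = \A\B_{\eps}\M$.

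Next I would invoke Lemma~\ref{thm:Ahat}(a), which gives $\u^*\hat{\A}_{\eps}\u = \hat{a}_{\eps}(u,u)$ for $u = \Phi_h\u$, to rewrite the numerator as $\hat{a}_{\eps}(u,u)$. For the denominator the same substitution yields
\[
(\x,\x)_{\M^{-1}} = \x^*\M^{-1}\x = \u^*\M\M^{-1}\M\u = \u^*\M\u = (\u,\u)_{\M} = (u,u),
\]
the last step being the mass-matrix identity $(\u,\vc)_{\M} = (u,v)_{L^2(\Omega)}$ specialized to $\vc = \u$. Combining the two computations shows that each Rayleigh quotient defining $\mathcal{F}_{\M^{-1}}(\A\B_{\eps})$ equals $\hat{a}_{\eps}(u,u)/(u,u)$.

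Finally I would observe that as $\x$ ranges over $\C^{N_h}\setminus\{0\}$, so does $\u = \M^{-1}\x$, and hence $u = \Phi_h\u$ ranges over all of $\V_h\setminus\{0\}$; this equality of index sets gives precisely the identity~\eqref{eq:fova}. There is no genuine obstacle here, as the argument is purely a change-of-variables bookkeeping once $\x = \M\u$ is introduced. The only points needing a little care are tracking the Hermitian symmetry of $\M$ and applying Lemma~\ref{thm:Ahat}(a) with matching arguments, since that identity is stated as $\u^*\hat{\A}_{\eps}\vc = \hat{a}_{\eps}(u,v)$ and $\hat{a}_{\eps}$ need not be symmetric, so one must use exactly the diagonal case $\vc = \u$.
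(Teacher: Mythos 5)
Your proposal is correct and follows essentially the same route as the paper: the paper likewise sets $\u=\M^{-1}\g$ (equivalently $\g=\M\u$), rewrites the Rayleigh quotient as $\u^*\hat{\A}_{\eps}\u/\u^*\M\u$, invokes Lemma~\ref{thm:Ahat}(a) in the diagonal case, and concludes via the bijection $\g\mapsto\u\mapsto u$. No substantive differences.
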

\begin{proof} Let $z = \frac{(\A \B_{\eps} \g,
\g)_{\M^{-1}}}{\|\g\|_{\M^{-1}}^2} \in
\mathcal{F}_{\M^{-1}}(\mathbf{A}\B_{\eps})$, $\u = \M^{-1}\g$ and $u= \Phi_h
\u.$ Then,
\begin{align*}
z = \frac{(\A \B_{\eps}\g, \g)_{\M^{-1}}}{\|\g\|_{\M^{-1}}^2}&= \frac{\g^*
\M^{-1}\A \B_{\eps} \g}{\|\g\|_{\M^{-1}}^{2}}\\ &=
\frac{(\M^{-1}\g)^*\A\B_{\eps}\g}{\|\g\|_{\M^{-1}}^2}\\
&= \frac{\u^*\A\B_{\eps}\M \u}{\|\M \u\|_{\M^{-1}}^{2}}=
\frac{\u^*\A\B_{\eps}\M
\u}{(\M\u)^*\M^{-1}\M\u}=\frac{\u^*\hat{\A}_{\eps}\u}{\u^*\M\u}=
\frac{\hat{a}_{\eps}(u,u)}{(u,u)},
\end{align*}
where in the last step we used part (a) of Lemma \ref{thm:Ahat}. Since the
correspondence \[\g \mapsto \u = \M^{-1} \g \mapsto u= \Phi\u\] is a bijection
between $\C^{N_h} \setminus \{\mathbf{0}\}$ and $\V_h \setminus \{0\}$, the 
equality of sets \eqref{eq:fova} follows.
\end{proof}

\begin{proposition}[Properties of $\hat a_{\eps}, Q, N$] \label{thm:properties}
Let $\hat{a}_{\eps}: \V_h \times \V_h \to \C$ be the sesquilinear form defined
in Lemma \ref{thm:ahat}, $\tau: \V_h' \to \V_h$ the $L^2$ Riesz map and $Q,N,J$
the operators introduced in Proposition \ref{thm:twogridgalerkin}.
\begin{enumerate}
	\item For all $u \in \V_h$ \[a(Qu,u) = \langle\tau^{-1} u, Nu \rangle. \]
	\item For all $u \in \V_h$, $v \in \V_H$
	   \[a(v,(I-N)u)=0.\]
	 \item For all $u \in \V_h$
	 \begin{equation}
	 \hat{a}_{\eps}(u,u)=(u,u) + i\eps (Ju,u).	
	 \end{equation}
\end{enumerate}
\end{proposition}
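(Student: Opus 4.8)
The plan is to verify each identity by unwinding the definitions of $Q$, $N$, and $J$ given in Proposition \ref{thm:twogridgalerkin} and exploiting the fact that the relevant test spaces are $\V_H$ or $\V_h$. For part (a), I would start from the defining relation \eqref{eq:Q} for $Q$, namely $a(Qu,v)=\langle R\tau^{-1}u, v\rangle$ for all $v\in\V_H$. Since $Qu\in\V_H$, I would choose $v=Qu$ in the defining relation \eqref{eq:N} for $N$, which reads $a(v,Nu)=a(v,u)$ for all $v\in\V_H$; setting $v=Qu$ gives $a(Qu,Nu)=a(Qu,u)$. Then, using the adjoint relation \eqref{eq:adj} together with $R\tau^{-1}u$ acting on $Nu\in\V_H$, I expect $\langle R\tau^{-1}u, Nu\rangle=\langle\tau^{-1}u, Nu\rangle$, so that $a(Qu,u)=a(Qu,Nu)=\langle\tau^{-1}u,Nu\rangle$. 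The main point is that $Nu$ lives in the coarse space so the coarse defining relations apply to it.

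For part (b), the claim $a(v,(I-N)u)=0$ for $v\in\V_H$ is an immediate rephrasing of the definition \eqref{eq:N} of $N$: that relation says precisely $a(v,Nu)=a(v,u)$ for all $v\in\V_H$, hence $a(v,u-Nu)=a(v,u)-a(v,Nu)=0$. This is a one-line consequence and I would present it as such; it records the Galerkin-orthogonality property of the adjoint correction operator $N$, which is what makes the duality argument in the later lemmas work.

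Part (c) is the key structural identity and ties everything together. I would begin with the definition $\hat{a}_{\eps}(u,u)=a((J+Q)u,u)=a(Ju,u)+a(Qu,u)$. For the $Q$ term I would substitute the result of part (a), obtaining $a(Qu,u)=\langle\tau^{-1}u,Nu\rangle$. For the $J$ term I would use the defining relation \eqref{eq:J}, $a_{\eps}(Ju,v)=\langle\tau^{-1}u,(I-N)v\rangle$, with the test function $v=u\in\V_h$; this gives $a_{\eps}(Ju,u)=\langle\tau^{-1}u,(I-N)u\rangle=\langle\tau^{-1}u,u\rangle-\langle\tau^{-1}u,Nu\rangle$. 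The subtlety is converting $a_{\eps}(Ju,u)$ back to $a(Ju,u)$: since $a_{\eps}(w,v)=a(w,v)-i\eps(w,v)$, I have $a(Ju,u)=a_{\eps}(Ju,u)+i\eps(Ju,u)$. Adding the two contributions, the $\langle\tau^{-1}u,Nu\rangle$ terms cancel, leaving
\begin{equation*}
\hat{a}_{\eps}(u,u)=\langle\tau^{-1}u,u\rangle+i\eps(Ju,u).
\end{equation*}
Finally, by the Riesz-map definition $\langle\tau^{-1}u,u\rangle=(u,u)$, which yields the stated formula $\hat{a}_{\eps}(u,u)=(u,u)+i\eps(Ju,u)$.

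I do not anticipate a serious obstacle here, since all three parts are formal manipulations of the defining variational relations; the only point requiring care is the bookkeeping in part (c), where one must correctly track the $i\eps(Ju,u)$ term produced by passing between $a_{\eps}$ and $a$, and confirm that the coarse-space correction terms cancel exactly. The payoff, as flagged in step 3 of the outline, is that this reduces the field-of-values analysis of $\A\B_{\eps}$ to controlling $\eps(Ju,u)/(u,u)$, i.e.\ the field of values of the single operator $J$ in the $L^2$ inner product.
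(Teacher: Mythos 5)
Your proposal is correct and follows essentially the same route as the paper: part (a) by substituting $v=Qu$ into the defining relation for $N$ and $w=Nu$ into that for $Q$, part (b) as an immediate rewriting of \eqref{eq:N}, and part (c) by splitting $\hat a_{\eps}(u,u)=a(Ju,u)+a(Qu,u)$, using $a=a_{\eps}+i\eps(\cdot,\cdot)$ and the defining relation \eqref{eq:J} with $v=u$, after which the $\langle\tau^{-1}u,Nu\rangle$ terms cancel. No gaps.
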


\begin{proof} We begin with part (a). From the definition of $Q$ and $N$ we
have, for all $u \in \V_h$, and $w,v \in \V_H$:
\begin{align} 
a(v,u)&=a(v,Nu), \label{eq:N1}\\
a(Qu,w) &= \langle R \tau^{-1}u,w \rangle \label{eq:Q1}.
\end{align}
Substituting $v=Qu$ in \eqref{eq:N1} we obtain for  $u \in \V_h$
\[a(Qu,u)=a(Qu,Nu),\]
and setting $w=Nu$ in \eqref{eq:Q1} gives for $u \in \V_h$
\[a(Qu,Nu) = \langle R \tau^{-1}u,Nu \rangle = \langle \tau^{-1}u,Nu \rangle.\]
therefore $a(Qu,u)=\langle \tau^{-1}u,Nu \rangle$ holds for all $u \in \V_h$.
The statement (b) is a consequence of the definition of the operator $N$. To
prove (c), recall that $a(u,v)=a_\eps(u,v)+i\eps(u,v)$, then using the
definition of $Q , J$ and part (a) we get
\begin{align*}
\hat{a}_{\eps}(u,u) = a((J+Q),u) &= a(Ju,u) + a(Qu,u)\\
&=a_{\eps}(Ju,u)+i\eps(Ju,u) + a(Qu,u)\\
&=  \langle \tau^{-1}u,(I-N)u \rangle + i\eps(Ju,u) +  \langle \tau^{-1} u,Nu
\rangle\\
&= \langle \tau^{-1}u, u \rangle + i\eps(Ju,u)\\
&= ( u, u) + i\eps(Ju,u).
\end{align*}
\end{proof}

The following two lemmas show that the sesquilinear form $a$ is coercive when
restricted to the range of the operator $N$.

\begin{lemma}[Bound on $L^2$ norm of $I-N$]\label{thm:L2bound} For every $u \in
\V_h$ 
\begin{equation}\label{eq:L2bound}
	\| (I-N) u \|_{L^2(\Omega)} \lsim Hk \|(I-N)u\|_{1,k,\Omega}.\end{equation}
\end{lemma}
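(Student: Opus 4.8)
The plan is to run an Aubin--Nitsche (Schatz) duality argument, exploiting the Galerkin orthogonality of $w \coloneq (I-N)u$ recorded in Proposition~\ref{thm:properties}~(b), namely $a(v,w)=0$ for all $v\in\V_H$. Since in that relation $w$ sits in the \emph{second} slot of $a$, I set up the dual problem so that $\|w\|_{L^2(\Omega)}^2$ is expressed through $a(\,\cdot\,,w)$: let $z\in H^1(\Omega)$ solve the Helmholtz problem with data $w$, i.e.\ $a(z,v)=(w,v)_{L^2(\Omega)}$ for all $v\in H^1(\Omega)$, and test with $v=w$ to obtain $\|w\|_{L^2(\Omega)}^2=a(z,w)$.

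Next I would insert the coarse-space interpolant. By the orthogonality $a(v_H,w)=0$ for any $v_H\in\V_H$ we have $\|w\|_{L^2(\Omega)}^2=a(z-v_H,w)$, and continuity of $a$ (with the $k$-independent constant $C_c$) gives $\|w\|_{L^2(\Omega)}^2\leq C_c\,\|z-v_H\|_{1,k,\Omega}\,\|w\|_{1,k,\Omega}$. Choosing $v_H=\Pi_{SZ}z$ to be the Scott--Zhang interpolant onto the coarse space $\V_H$ and applying the interpolation estimate \eqref{eq:szint1} on the coarse mesh $T_H$ (the same bound with $h$ replaced by $H$, valid because $\V_H$ is again a P1 space on a shape-regular mesh) yields $\|z-\Pi_{SZ}z\|_{1,k,\Omega}\lsim H\|z\|_{H^2(\Omega)}+Hk\,\|z\|_{H^1(\Omega)}$.

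The remaining ingredient is regularity and stability of the dual solution. Since $z$ solves the Helmholtz problem with $f_1=w$ and $f_2=0$, Theorem~\ref{thm:stab} (whose constants are $k$-independent) gives $\|z\|_{1,k,\Omega}\lsim\|w\|_{L^2(\Omega)}$ and $|z|_{H^2(\Omega)}\lsim(1+k)\|w\|_{L^2(\Omega)}$, using convexity of $\Omega$ so that $z\in H^2(\Omega)$ by elliptic regularity. Combining $\|z\|_{H^1(\Omega)}\leq\|z\|_{1,k,\Omega}$ (for $k\geq 1$) with $\|z\|_{H^2(\Omega)}\leq\|z\|_{H^1(\Omega)}+|z|_{H^2(\Omega)}\lsim k\,\|w\|_{L^2(\Omega)}$, both terms collapse to $\|z-\Pi_{SZ}z\|_{1,k,\Omega}\lsim Hk\,\|w\|_{L^2(\Omega)}$. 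Substituting back gives $\|w\|_{L^2(\Omega)}^2\lsim Hk\,\|w\|_{L^2(\Omega)}\,\|w\|_{1,k,\Omega}$, and dividing by $\|w\|_{L^2(\Omega)}$ (the case $w=0$ being trivial) produces the claimed estimate for $w=(I-N)u$.

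The delicate point, rather than any single hard computation, is bookkeeping: matching the direction of the duality to the orthogonality so that $w$ occupies the correct slot of $a$, and verifying that every constant is genuinely independent of $k$ so that the powers of $k$ assemble to \emph{exactly} $Hk$ and not something larger. This is precisely where the $k$-explicit stability estimate of Theorem~\ref{thm:stab} and the convexity of $\Omega$ (for $H^2$-regularity of the dual problem) do the real work; if one only had a stability bound that degraded with $k$, the extra factor would spoil the target exponent.
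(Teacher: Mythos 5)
Your proposal is correct and follows essentially the same route as the paper: the same Schatz/Aubin--Nitsche duality argument with the dual solution in the first slot of $a$, the same use of the Galerkin orthogonality from Proposition~\ref{thm:properties}~(b) to insert the coarse Scott--Zhang interpolant, the same $H$-scaled version of \eqref{eq:szint1}, and the same $k$-explicit stability and $H^2$-regularity input from Theorem~\ref{thm:stab} and convexity of $\Omega$. The bookkeeping you flag as the delicate point is handled identically in the paper, and the constants assemble to $Hk$ in the same way.
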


\begin{proof}
We use a duality argument. Let $v=(I-N)u$  and $\phi$ the solution to the
problem: Find $\phi \in H^{1}(\Omega)$ such that
\[a(\phi,w) = (  v,w ) \text{ for all $w \in H^{1}(\Omega)$}.\]
If $\Pi_{SZ}: H^{1}(\Omega) \to \V_H$ is the Scott-Zhang interpolation
operator, for $w=v$ we have
\begin{align*}
\|(I-N)u\|^2_{L^{2}(\Omega)}=|(v,w)|  &= |a(\phi, (I-N)u)|\\
&=
  |a(\phi - \Pi_{SZ} \phi,(I-N)u)| \,\,\, \text{ (Lemma \ref{thm:properties},
part (b))}\\
&\lsim \| \phi - \Pi_{SZ} \phi \|_{1,k,\Omega} \|(I-N)u\|_{1,k,\Omega}
\, \text{(continuity of $a$)}.
\end{align*}
Since $(I-N)u \in H^1(\Omega)$ and $\Omega$ is a convex polygon we have
$\phi\in H^2(\Omega)$ (Theorem 2, section 6.3.1 in \cite{Evans:2016wj}), and
using the interpolation estimate \eqref{eq:szint1} and the stability estimates
for the Helmholtz problem (Theorem \ref{thm:stab}) gives
\begin{align*}
	\| \phi - \Pi_{SZ} \phi \|_{1,k,\Omega} &\leq H \|\phi\|_{H^2(\Omega)} +
Hk\|\phi\|_{H^{1}(\Omega)}\\
	& = H(|\phi|_{H^2(\Omega)} + \|\phi\|_{H^1(\Omega)}) + 
Hk\|\phi\|_{H^{1}(\Omega)}\\
	& \lsim H(k\|(I-N)u\|_{L^2(\Omega)} +\|(I-N)u\|_{L^2(\Omega)}) +
Hk\|(I-N)u\|_{L^2(\Omega)}\\
	&\lsim Hk \|(I-N)u\|_{L^2(\Omega)}.
\end{align*}
Combining these estimates we obtain
\begin{align*}
\| (I-N)u \|_{L^2(\Omega)}^2 \lsim Hk \|(I-N)u\|_{L^2(\Omega)}
\|(I-N)u\|_{1,k,\Omega},
\end{align*}
and dividing both sides of the inequality by $\|(I-N)u\|_{L^2(\Omega)}$ yields
\eqref{eq:L2bound}.
\end{proof}

In the next lemma we show that if the coarse grid is sufficiently fine, the
bilinear form $a$ is coercive when restricted to the range of the operator
$I-N$.  

\begin{lemma}\label{thm:coerc} There exist constants $C, \alpha$
independent of $h,H,k$ such that \linebreak if $Hk^2<C$ 
\[ \alpha \|(I-N)u\|^2_{1,k,\Omega} \leq \Re{a((I-N)u,(I-N)u)}.\]
\end{lemma}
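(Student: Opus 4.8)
The plan is to derive the coercivity of $a$ on the range of $I-N$ directly from the G\aa rding identity stated (as an equality) in Lemma~1(b), using the newly proved $L^2$ estimate to control the indefinite term. Writing $v=(I-N)u$, the G\aa rding identity reads
\[
\Re a(v,v) = \|v\|_{1,k,\Omega}^2 - 2k^2\|v\|_{L^2(\Omega)}^2 .
\]
Hence the whole task reduces to showing that, under the mesh condition $Hk^2<C$, the negative contribution $2k^2\|v\|_{L^2(\Omega)}^2$ can be absorbed into a small fraction of $\|v\|_{1,k,\Omega}^2$, leaving a strictly positive multiple of the energy norm on the right.

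The key step is to insert the bound from Lemma~\ref{thm:L2bound}, namely $\|v\|_{L^2(\Omega)}\le C_1 Hk\,\|v\|_{1,k,\Omega}$ with $C_1$ independent of $h,H,k$. Squaring this and multiplying by $2k^2$ gives
\[
2k^2\|v\|_{L^2(\Omega)}^2 \;\le\; 2C_1^2\,(Hk^2)^2\,\|v\|_{1,k,\Omega}^2 ,
\]
where the two factors of $k$ produced by the $H^2$-regularity and Helmholtz stability estimates inside Lemma~\ref{thm:L2bound} combine with the explicit $k^2$ to yield exactly the scale-invariant quantity $(Hk^2)^2$. Substituting back into the G\aa rding identity yields
\[
\Re a(v,v) \;\ge\; \bigl(1 - 2C_1^2(Hk^2)^2\bigr)\,\|v\|_{1,k,\Omega}^2 .
\]
Choosing the threshold $C$ in the hypothesis $Hk^2<C$ small enough that $2C_1^2C^2<1$ (for instance $C<1/(\sqrt2\,C_1)$), the prefactor is bounded below by $\alpha\coloneq 1-2C_1^2C^2>0$, which is independent of $h,H,k$. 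This is precisely the asserted inequality $\alpha\|(I-N)u\|_{1,k,\Omega}^2\le\Re a((I-N)u,(I-N)u)$.

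The substantive analytic work has already been carried out in Lemma~\ref{thm:L2bound} (the Schatz-type duality argument together with convex-domain $H^2$ regularity), so no further use of the structure of $N$ is needed here beyond the Galerkin orthogonality that entered that lemma. The only point requiring care is the bookkeeping of the powers of $k$: one must verify that multiplying $k^2$ by the squared $L^2$ estimate reproduces exactly $(Hk^2)^2$ and not a different power of $k$, since it is this exponent that fixes the mesh condition $Hk^2\le C$ propagated into the statement of Theorem~\ref{thm:main}. I expect this exponent tracking, rather than any genuine difficulty, to be the place where an error could slip in.
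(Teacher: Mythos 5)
Your proposal is correct and follows essentially the same route as the paper: both start from the G\aa rding identity $\Re a(v,v)=\|v\|_{1,k,\Omega}^2-2k^2\|v\|_{L^2(\Omega)}^2$ with $v=(I-N)u$, absorb the indefinite term via Lemma~\ref{thm:L2bound} to get a prefactor $1-\tilde{C}(Hk^2)^2$, and choose the threshold $C$ so that this prefactor stays bounded below by a positive $\alpha$. Your exponent bookkeeping ($2k^2\cdot(Hk)^2=(Hk^2)^2$ up to constants) matches the paper's $\tilde{C}H^2k^4$ exactly.
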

\begin{proof}
The sesquilinear form $a$ satisfies the G\aa rding inequality
\[\Re{a}((I-N)u,(I-N)u) =\|(I-N)u\|^{2}_{1,k,\Omega} -2k^2
\|(I-N)u\|_{L^2(\Omega)}^2. \]
Combining this with Lemma \ref{thm:L2bound} gives
\begin{align*}
\|(I-N)u\|^{2}_{1,k,\Omega} -2k^2 \|(I-N)u\|_{L^2(\Omega)} &\geq
\|(I-N)u\|_{1,k,\Omega}^2 - |\tilde{C}H^2k^4\|(I-N)u\|_{1,k,\Omega}^2\\
&= (1-\tilde{C}H^2k^4) \|(I-N)u\|_{1,k,\Omega}^2
\end{align*}
where the constant $\tilde{C}$ comes from the estimate in  Lemma
\ref{thm:L2bound}. Let $\alpha \in (0,1)$ and define
$C = [(1-\alpha) \tilde{C}^{-1}]^{1/2} > 0$. It is easy to see that if
$Hk^2<C$
we
have
$(1-\tilde{C}H^2K^4) > \alpha$, therefore we have
\begin{equation*}
\alpha \|(I-N)u\|^2_{1,k,\Omega}  \leq \Re{a((I-N)u,(I-N)u)}.
\end{equation*}
\end{proof}

\begin{lemma}[Using coercivity to bound
$\|(I-N)Ju\|_{1,k,\Omega}$]\label{thm:lemmaJ2} Suppose that $H$ satisfies the
requirements of the previous lemma. Then, for all $u \in \V_h$
\begin{equation}\label{eq:I-N}
\|(I-N)u\|_{1,k, \Omega} \lsim (1 + Hk) \|u\|_{1,k, \Omega}
\end{equation}
\end{lemma}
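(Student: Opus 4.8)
The plan is to combine three ingredients that are already at hand: the coercivity of $a$ on the range of $I-N$ (Lemma~\ref{thm:coerc}, valid precisely because $Hk^2<C$), the Galerkin-type orthogonality $a(v,(I-N)u)=0$ for all $v\in\V_H$ (Proposition~\ref{thm:properties}, part~(b)), and the $k$-independent continuity of $a$ with constant $C_c$. Writing $w=(I-N)u\in\V_h$, the first step is to rewrite $a(w,w)$ so that the orthogonality becomes usable. Since $Nu\in\V_H$, part~(b) of Proposition~\ref{thm:properties} gives $a(Nu,w)=0$, and hence
\[
a(w,w)=a(u-Nu,w)=a(u,w)-a(Nu,w)=a(u,w).
\]
More generally, for any $\chi\in\V_H$ we have $\chi-Nu\in\V_H$, so $a(\chi-Nu,w)=0$ and therefore $a(w,w)=a(u-\chi,w)$ for every $\chi\in\V_H$.

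With this identity in hand the estimate is immediate. Since $Hk^2<C$, Lemma~\ref{thm:coerc} applies and yields
\[
\alpha\,\|(I-N)u\|_{1,k,\Omega}^2\le\Re\,a(w,w)=\Re\,a(u-\chi,w)\le C_c\,\|u-\chi\|_{1,k,\Omega}\,\|(I-N)u\|_{1,k,\Omega},
\]
where the last inequality is the continuity of $a$. Dividing by $\|(I-N)u\|_{1,k,\Omega}$ gives the C\'ea-type quasi-optimality estimate
\[
\|(I-N)u\|_{1,k,\Omega}\le\frac{C_c}{\alpha}\,\inf_{\chi\in\V_H}\|u-\chi\|_{1,k,\Omega}.
\]

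Finally, to reach the stated bound I would estimate the best-approximation error on the right. Choosing $\chi=\Pi_{SZ}u\in\V_H$ (the Scott--Zhang interpolant onto the coarse space) and invoking the interpolation estimate~\eqref{eq:szint2} on the coarse mesh gives $\inf_{\chi\in\V_H}\|u-\chi\|_{1,k,\Omega}\lsim(1+Hk)\|u\|_{H^1(\Omega)}\lsim(1+Hk)\|u\|_{1,k,\Omega}$, which is exactly the claim. I note in passing that the simpler choice $\chi=0$ (equivalently $\chi=Nu$ in the identity above) already yields the sharper bound $\|(I-N)u\|_{1,k,\Omega}\lsim\|u\|_{1,k,\Omega}$, and the stated factor $(1+Hk)$ then follows a fortiori since $1\le 1+Hk$.

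I do not expect a genuine obstacle here, because coercivity, orthogonality, and continuity are all already available and the argument is a standard quasi-optimality reduction. The only point requiring care is that the orthogonality in Proposition~\ref{thm:properties}(b) sits in the \emph{second} argument of $a$, so one must subtract the coarse-space element in the \emph{first} argument (as in $a(w,w)=a(u-\chi,w)$) rather than the second; getting this slot wrong would break the cancellation.
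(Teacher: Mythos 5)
Your proof is correct and follows essentially the same route as the paper's: coercivity from Lemma~\ref{thm:coerc}, the orthogonality $a(\chi,(I-N)u)=0$ for $\chi\in\V_H$ to replace $a(w,w)$ by $a(u-\Pi_{SZ}u,w)$, continuity of $a$, and the Scott--Zhang estimate~\eqref{eq:szint2} on the coarse mesh. Your side remark is also accurate: taking $\chi=0$ already gives the sharper bound $\|(I-N)u\|_{1,k,\Omega}\le (C_c/\alpha)\|u\|_{1,k,\Omega}$, of which the stated $(1+Hk)$ estimate is a weaker consequence.
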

\begin{proof}
Let $H$ be such that $Hk^2$ is sufficiently small, i.e. $Hk^2 < C$ where $C$ is
the constant from the previous lemma. For $u \in \V _h$, let $\Pi_{SZ} u$ be
the
Scott-Zhang interpolant of $u$ in $\V _H$. Combining  Lemma \ref{thm:coerc},
the orthogonality
condition
in part (b) of Lemma \ref{thm:properties} and the
continuity of $a$ we have
\begin{align*}
\|(I-N)u\|^2_{1,k}  &\lsim \Re{a((I-N)u,(I-N)u)} \,\, \text{ (Lemma
\ref{thm:coerc})} \\
&=\Re a(u - \Pi_C u,(I-N)u)  \,\, \text{(Lemma \ref{thm:properties}, (b))}
\\
&\lsim \|u-\Pi_C u \|_{1,k} \|(I-N)u\|_{1,k} \,\, \text{(by continuity of
$a$).}
\end{align*}
With the estimate \eqref{eq:szint2} for the Scott-Zhang interpolation
operator, we
obtain
\begin{align*}
	\| u - \Pi_{SZ} u \|_{1,k,\Omega} & \lsim (1 + Hk) \|u\|_{H^{1}(\Omega)} \\
& \leq
(1 + Hk) \|u\|_{1,k,\Omega}.\end{align*}
Therefore,
\begin{align*}
\|(I-N)u\|^2_{1,k,\Omega}  &\lsim \|u-\Pi_C u \|_{1,k,\Omega}
\|(I-N)u\|_{1,k,\Omega}
\\
&\lsim (1 + Hk) \|u\|_{1,k,\Omega} \|(I-N)u\|_{1,k,\Omega}.
\end{align*}
Dividing both sides by $\|(I-N)u\|_{1,k,\Omega}$ gives \eqref{eq:I-N}.
\end{proof}

\begin{lemma}[Bound on $L^2$ norm of $J$]\label{thm:lemmaJ3} For all $u \in
\V_h$ we have
\begin{equation}
\| J u \|_{L^2(\Omega)} \lsim \frac{Hk^2}{\eps} \|u\|_{L^2(\Omega)}. 
\end{equation}
\end{lemma}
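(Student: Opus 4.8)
The plan is to test the equation defining $J$ against $v = Ju$ and to exploit the coercivity of $a_\eps$. Recall from Proposition \ref{thm:twogridgalerkin}(c) that $Ju \in \V_h$ is characterized by $a_\eps(Ju,v) = \langle \tau^{-1}u,(I-N)v\rangle$ for all $v \in \V_h$, and that, since $\tau$ is the $L^2$ Riesz map, $\langle \tau^{-1}u,w\rangle = (u,w)_{L^2(\Omega)}$ for every $w \in \V_h$. Choosing $v = Ju$ and applying the coercivity of $a_\eps$ (with constant $\alpha\eps/k^2$) followed by Cauchy--Schwarz gives
\[
\alpha\frac{\eps}{k^2}\|Ju\|_{1,k,\Omega}^2 \le |a_\eps(Ju,Ju)| = |(u,(I-N)Ju)_{L^2(\Omega)}| \le \|u\|_{L^2(\Omega)}\,\|(I-N)Ju\|_{L^2(\Omega)}.
\]
This reduces the task to controlling the factor $\|(I-N)Ju\|_{L^2(\Omega)}$.

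For that factor I would chain the two estimates already proved. Lemma \ref{thm:L2bound} applied to $Ju$ gives $\|(I-N)Ju\|_{L^2(\Omega)} \lsim Hk\,\|(I-N)Ju\|_{1,k,\Omega}$, while Lemma \ref{thm:lemmaJ2} (valid under $Hk^2 < C$) gives $\|(I-N)Ju\|_{1,k,\Omega} \lsim (1+Hk)\|Ju\|_{1,k,\Omega}$. Combining these,
\[
\|(I-N)Ju\|_{L^2(\Omega)} \lsim Hk(1+Hk)\,\|Ju\|_{1,k,\Omega}.
\]
Substituting into the previous display and cancelling one factor of $\|Ju\|_{1,k,\Omega}$ (if $Ju=0$ the bound is trivial) yields
\[
\frac{\eps}{k^2}\|Ju\|_{1,k,\Omega} \lsim Hk(1+Hk)\,\|u\|_{L^2(\Omega)}.
\]

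Finally, I would pass from the energy norm back to the $L^2$ norm via $\|Ju\|_{L^2(\Omega)} \le k^{-1}\|Ju\|_{1,k,\Omega}$, which is immediate from $\|w\|_{1,k,\Omega}^2 = \|\nabla w\|_{L^2(\Omega)}^2 + k^2\|w\|_{L^2(\Omega)}^2$. This produces $\|Ju\|_{L^2(\Omega)} \lsim Hk^2(1+Hk)\eps^{-1}\|u\|_{L^2(\Omega)}$, and since the standing hypothesis $Hk^2 < C$ (inherited from Lemmas \ref{thm:coerc}--\ref{thm:lemmaJ2}) forces $Hk = (Hk^2)/k < C/k_0$ to be bounded, the factor $(1+Hk)$ is absorbed into the implicit constant, giving the claimed $\|Ju\|_{L^2(\Omega)} \lsim (Hk^2/\eps)\|u\|_{L^2(\Omega)}$.

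The proof is structurally short, so the only delicate point is the bookkeeping of the $H$ and $k$ powers: the coercivity constant contributes the crucial $k^2/\eps$, Lemma \ref{thm:L2bound} supplies exactly one factor $Hk$, and the energy-to-$L^2$ conversion removes one power of $k$, so that the three contributions assemble precisely into $Hk^2/\eps$. One must also keep track that the correct test function is $v = Ju$ (so that the right-hand side becomes $(u,(I-N)Ju)_{L^2(\Omega)}$ rather than a term one cannot control), and verify that $Hk^2 < C$ is what renders the spurious factor $(1+Hk)$ harmless.
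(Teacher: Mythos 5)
Your proof is correct and follows essentially the same route as the paper: test the defining equation of $J$ with $v=Ju$, apply coercivity of $a_\eps$ and Cauchy--Schwarz, chain Lemmas \ref{thm:L2bound} and \ref{thm:lemmaJ2} to control $\|(I-N)Ju\|_{L^2(\Omega)}$, cancel a factor of $\|Ju\|_{1,k,\Omega}$, and convert to the $L^2$ norm while absorbing $(1+Hk)$ via $Hk^2 \lsim 1$. The power bookkeeping matches the paper's argument exactly.
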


\begin{proof}
	We estimate as follows:
	\begin{align*}
		\|Ju\|_{1,k,\Omega}^2 &\leq \alpha^{-1}\frac{k^2}{\eps}|a_{\eps}(Ju,Ju)| \,\,
\text{ (by coercivity of $a_\eps$)}\\
		&= \alpha^{-1}\frac{k^2}{\eps} |\langle \tau^{-1}u, (I-N)Ju\rangle | \,\,
\text{ (by definition of $J$)}\\
		&=\alpha^{-1}\frac{k^2}{\eps}| (u,(I-N)Ju)_{L^2(\Omega)}| \,\, \text{ (by
definition of $\tau$)}\\
		&\leq \alpha^{-1}\frac{k^2}{\eps} \|u\|_{L^2(\Omega)}
\|(I-N)Ju\|_{L^2(\Omega)} \,\,\, \text{ (by the Cauchy-Schwarz ineq.)}\\
		&\lsim \frac{Hk^3}{\eps}\|u\|_{L^2(\Omega)}\|(I-N)Ju\|_{1,k,\Omega} \,\,\,
\text{(by Lemma \ref{thm:L2bound})} \\
		& \lsim \frac{Hk^3}{\eps} (1+Hk) \|u\|_{L^2(\Omega)} \|Ju\|_{1,k} \,\,\,
\text{(by Lemma \ref{thm:lemmaJ2})},
	\end{align*}
dividing each side of the inequality by $ \|Ju\|_{1,k}$ we obtain
\[\|Ju\|_{1,k,\Omega}  \lsim \frac{Hk^3}{\eps} (1+Hk) \|u\|_{L^2(\Omega)} \]
and, since $Hk  \lsim 1$,
\[\|Ju\|_{L^2(\Omega)} \leq k^{-1} \|Ju\|_{1,k,\Omega} \lsim
\frac{Hk^2}{\eps}\|u\|_{L^2(\Omega)}.\] 
\end{proof}	
We can now prove our main result.
\begin{proof}[Proof of Theorem \ref{thm:main}] Combining Lemma \ref{thm:ahat}
and part (c) of Proposition \ref{thm:properties}, we have that the field of
values of the preconditioned matrix $\A\B_\eps$ in the inner product induced by
$\M^{-1}$ is
the
set
\begin{equation}
\mathcal{F}_{\M^{-1}}(\A\B_{\eps}) =  \left\{ 1 + i\eps\frac{(Ju,u)}{(u,u)}: u
\in \V_h\setminus \{0\} \right\}.
\end{equation}
By Lemma \ref{thm:lemmaJ3}, if $\eps \lsim k^2$ and $Hk^2$ is sufficiently
small we have
\[\eps \|Ju\| \lsim Hk^2 \|u\|,\]
therefore, under this restriction on $H$, we have
\[\eps\frac{|(Ju,u)|}{(u,u)} \leq \eps\frac{\|Ju\|\|u\|}{\|u\|^2} \lsim Hk^2,\]
so choosing $Hk^2$ sufficiently small the field of values
$\mathcal{F}_{\M^{-1}}(\A\B_{\eps})$ lies inside a circle centered at $1$ that
does not contain the origin, with radius independent of the
wavenumber $k$. 
Therefore,  under this restriction on the coarse grid size, the distance of the
field of values to the origin
$\nu_{\M^{-1}}(\A\B_{\eps})$ is
independent
of
 $k$. Moreover, the inequality (see Chapter 1 of
\cite{Horn:1994tx})
\[  \|\A\B_{\eps}\|_{\M^{-1}} \leq 2 \max_{z \in \mathcal{F}_{\M^{-1}} (\A
\B_{\eps})} |z|, \]
implies that the norm $\|\A \B_{\eps}\|_{\M^{-1}}$ is bounded independently of
$k$ as well. Therefore, the quantity
\[\frac{\nu_{\M^{-1}}(\A\B_{\eps})}{\|\A\B_{\eps}\|_{\M^{-1}}}\]
is bounded away from zero independently of $k$. Using the residual
bound \eqref{eq:elmanbound} we conclude that, for  $Hk^2$ sufficiently small,
if the GMRES method in the inner product induced by $\M^{-1}$ is applied to the
linear system
\[\A\B_{\eps} \g = \f,\]
the number of iterations required to obtain a reduction of the relative
residual by a fixed tolerance is bounded by a constant, independent of the 
wavenumber $k$.
\end{proof}

In the next corollary we show that Theorem \ref{thm:main}
also
holds for the Euclidean inner product in the case of quasi-uniform
meshes.

\begin{corollary}\label{thm:corollarynorm}
 Suppose, in addition to the hypothesis of Theorem \ref{thm:main},
that the
sequence of meshes $\{T_h\}_{h >0}$ is quasi-uniform. Then,
there exists a
constant $C>0$
depending
only
on
the domain $\Omega$ such that if the coarse grid size $H$ satisfies $Hk^{2} \leq  C$
the GMRES method in the Euclidean inner product applied to the preconditioned
system
\begin{equation*}
	\A\B_{\eps}\g=\f,
\end{equation*}
converges in a number of iterations independent of the wavenumber
$k$.
\end{corollary}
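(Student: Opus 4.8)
The plan is to deduce the corollary from Theorem~\ref{thm:main} by a norm-equivalence argument, using the fact that for a quasi-uniform mesh the mass matrix is spectrally equivalent to a scalar multiple of the identity with constants independent of $h$. Concretely, for a quasi-uniform family $\{T_h\}$ there exist constants $c_1,c_2>0$ independent of $h$ such that
\[ c_1 h^d \|\x\|_2^2 \leq \x^*\M\x \leq c_2 h^d \|\x\|_2^2 \qquad \text{for all } \x \in \C^{N_h}, \]
where $d$ is the spatial dimension (this is the standard estimate for the P1 mass matrix, see \cite{Ern:2013dd}). Passing to $\M^{-1}$ gives the two-sided bound
\[ c_2^{-1/2} h^{-d/2}\|\x\|_2 \leq \|\x\|_{\M^{-1}} \leq c_1^{-1/2} h^{-d/2}\|\x\|_2, \]
so the Euclidean and $\M^{-1}$ norms on $\C^{N_h}$ are equivalent with a ratio $(c_2/c_1)^{1/2}$ that is bounded independently of $h$, and hence of $k$.

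The key observation for transferring the convergence estimate is that the Krylov subspace $\Kr_n(\A\B_{\eps},\mathbf{r}_0)$ does not depend on the choice of inner product; only the residual-minimizing element of the associated affine space differs. Consequently, if $q_n$ is the residual polynomial (with $q_n(0)=1$) selected by GMRES in the $\M^{-1}$ inner product, then $q_n$ is an admissible, though in general suboptimal, candidate for the Euclidean minimization, so the Euclidean residual $\mathbf{r}_n^{E}$ satisfies $\|\mathbf{r}_n^{E}\|_2 \leq \|q_n(\A\B_{\eps})\mathbf{r}_0\|_2$. Since $q_n(\A\B_{\eps})\mathbf{r}_0$ is precisely the $\M^{-1}$-GMRES residual $\mathbf{r}_n^{M}$, I would chain the two norm equivalences together with the Elman bound \eqref{eq:elmanbound} established in the proof of Theorem~\ref{thm:main} to get
\[ \frac{\|\mathbf{r}_n^{E}\|_2}{\|\mathbf{r}_0\|_2} \leq \left(\frac{c_2}{c_1}\right)^{1/2}\left(1-\frac{\nu_{\M^{-1}}(\A\B_{\eps})^2}{\|\A\B_{\eps}\|_{\M^{-1}}^2}\right)^{n/2}, \]
where the factor $h^{d/2}$ arising from the lower equivalence applied to $\|\mathbf{r}_0\|_{\M^{-1}}$ exactly cancels the factor $h^{-d/2}$ arising from the upper equivalence applied to $\|\mathbf{r}_n^{M}\|_2$.

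To conclude, recall that Theorem~\ref{thm:main}, applied with a constant slightly smaller than the one in its statement so as to accommodate the nonstrict inequality $Hk^2\leq C$, guarantees that the contraction factor $\rho \coloneq (1-\nu_{\M^{-1}}(\A\B_{\eps})^2/\|\A\B_{\eps}\|_{\M^{-1}}^2)^{1/2}$ is bounded away from $1$ independently of $k$. Since $(c_2/c_1)^{1/2}$ is likewise $k$-independent under quasi-uniformity, the relative Euclidean residual drops below any fixed tolerance $\delta$ once $n \geq (\log\delta - \tfrac{1}{2}\log(c_2/c_1))/\log\rho$, a bound independent of $k$. I expect the only genuine point of care to be the bookkeeping of the powers of $h$: it is the cancellation of the $h^{\pm d/2}$ factors that reduces the cross-norm comparison to a $k$-independent constant, and this cancellation relies essentially on quasi-uniformity, which keeps the spectral-equivalence constants $c_1,c_2$ (and hence $c_2/c_1$) bounded; shape-regularity alone would not be enough.
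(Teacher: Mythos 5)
Your proposal is correct, but it takes a genuinely different route from the paper. The paper re-establishes the field-of-values containment directly in the Euclidean inner product: it writes $\A\B_{\eps} = \I + i\eps\hat{\J}$ with $\hat{\J} = \M\A_{\eps}^{-1}(\I - \A\P\A_H^{-1}\P^*)$, uses the quasi-uniform norm equivalence to show $\|\hat{\J}\|_{\I} \sim \|\hat{\J}\|_{\M^{-1}}$, identifies $\hat{\J}$ as the coordinate representation of $\tau^{-1}J\tau$ so that $\|\hat{\J}\|_{\M^{-1}} = \sup_{u}\|Ju\|_{L^2(\Omega)}/\|u\|_{L^2(\Omega)} \lsim Hk^2/\eps$, and concludes that $\mathcal{F}_{\I}(\A\B_{\eps})$ lies in a $k$-independent disk around $1$, after which the Elman bound is applied directly in the Euclidean inner product. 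You instead keep the $\M^{-1}$ field-of-values estimate from Theorem~\ref{thm:main} untouched and transfer the residual bound across inner products, exploiting the fact that the Krylov subspace (and hence the admissible set of residual polynomials with $q_n(0)=1$) is inner-product independent, so the Euclidean-optimal residual is dominated by the $\M^{-1}$-optimal one up to the equivalence ratio $(c_2/c_1)^{1/2}$, with the $h^{\pm d/2}$ factors cancelling exactly as you note. Both arguments hinge on quasi-uniformity in the same place (uniformity of the mass-matrix spectral equivalence constants). What the paper's route buys is a structural statement about the Euclidean field of values itself (and hence a bound on $\|\A\B_{\eps}\|_{\I}$), which is of independent interest; what your route buys is brevity and generality --- it works verbatim for any pair of uniformly equivalent inner products without re-deriving any field-of-values estimate, at the cost of a harmless multiplicative constant in the residual bound. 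The only points of care, which you handle at least implicitly, are that both GMRES runs must share the same initial residual $\mathbf{r}_0$, and that the constant $(c_2/c_1)^{1/2} \geq 1$ enters the iteration count only additively through $\log(c_2/c_1)$.
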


\begin{proof} Recall that for a sequence of quasi-uniform meshes  the
following norm
equivalence  holds:
\[ \| \vc\|_{\M} \sim h^{d/2} \|\vc\|_{\I}, \]
for all  $\vc \in
\mathbb{C}^{N_h}$ and  $ h > 0$,
with the hidden constants  independent of $h$ \cite{Braess:2007wm}.

Using this fact
and
the
characterization of the norm $\|\cdot\|_{\M^{-1}}$ as the dual norm of
$\|\cdot\|_{\M}$ (Theorem \ref{thm:dual}, part (c)), it can be shown that \[ \|
\f\|_{\M^{-1}}
\sim
h^{-d/2} \|\f\|_{\I}, \]
for all  $\f \in
\mathbb{C}^{N_h}$ and  $ h > 0$, with the hidden constants  independent
of $h$.  A straightforward computation shows that 
\[\A \B_\eps =  \I + i \eps \hat\J,\]
 where $\hat{\J} = \M \A_{\eps}^{-1}( \I- \A\P
\A_{H}^{-1}\P^* )$, therefore, the field of
values
of
$\mathbf{\A\B_{\eps}}$
in
the Euclidean inner product equals
\begin{equation}\label{eq:fovI}
\mathcal{F}_{\I}(\A\B_{\eps}) = \left\{ 1 + i \eps \frac{( \hat{\J} \f,
\f)_{\I}}{(\f,\f)_{\I}}:
\mathbf{0} \neq \f \in \mathbb{C}^{N_h} \right\}.
\end{equation}
 Using the norm equivalence
between
$\|\cdot \|_{\I}$ and $\|\cdot \|_{\M^{-1}}$ from above we have
\begin{equation} \label{eq:normsim} \| \hat{\J} \|_{\I}  \sim  \| \hat{\J}
\|_{\M^{-1}},
\end{equation}
and combining part (c) of Proposition
\ref{thm:twogridgalerkin} with part (b) of Proposition
\ref{thm:dual}, we have that the matrix $\hat{\J}$ is the representation
of
the
operator $ \hat{J} = \tau^{-1} J \tau $ 
and it follows from  Theorem 5.9 that
\begin{equation}\label{eq:norm}
 \| \hat{\J} \|_{\M^{-1}} = \sup_{\f \in \mathbb{C}^{N_h}}
\frac{
\|\hat{\J}
\f\|_{\M^{-1}}
 }{\| \f\|_{\M^{-1}}}  =  \sup_{\u \in \mathbb{C}^{N_h}} \frac{ \|\J
\u\|_{\M}
 }{\| \u\|_{\M}} = \sup_{u \in \V_h} \frac{ \|Ju\|_{L^2(\Omega)}
 }{\| u\|_{L^2(\Omega)}} \lsim \frac{Hk^2}{\eps}. \end{equation}
Therefore, for all $\mathbf{0} \neq \f \in \C^{N_h}$ we have 
\[ \eps \frac{|( \hat{\J} \f,
\f)_{\I}|}{|(\f,\f)_{\I}|} \leq \eps  \frac{\|\hat{\J} \f\|_{\I} \|\f\|_{\I}
}{\|\f\|_{\I}^2} \leq \eps \frac{\|\hat{\J}\|_{\I}
\|\f\|_{\I}^2}{\|\f\|_{\I}^2}
\lsim Hk^2, \]
where we have used the Cauchy-Schwarz inequality in the first step and
\eqref{eq:normsim} together with \eqref{eq:norm} in the last step. Using
\eqref{eq:fovI}, we
conclude that choosing $Hk^2$ sufficiently small the field of values
$\mathcal{F}_{\I}(\A\B_{\eps})$ lies inside a circle centered at $1$ that
does not contain the origin, with radius independent of  the
wavenumber $k$. The rest of the proof is similar to the last part of the proof
of Theorem 5.9.
\end{proof}

\section{Numerical Experiments}

In this section we present the results of some numerical experiments that illustrate our theoretical results. The experiments were performed using MATLAB 2017b on a Macbook Pro with a 2,4 GHz Intel Core i5 processor. For the discretization with finite elements we have used the package $i$FEM \cite{Chen:2009wx}.

\subsection*{Experiment 1}
In our first experiment we study the Helmholtz
problem~\eqref{eq:helmholtz} on
the domain $\Omega =(0,1)$. Although this problem leads to linear systems that
are small and do not need 
to be solved
with
iterative methods, we use them here for illustrative purposes since for higher dimensional problems and large
values of $k$ the
computation of the field of values is very expensive. 
According to our theory, we choose for the
discretization
the
number
of interior
gridpoints for the coarse mesh equal to $\lceil
\frac{k^2}{2}
\rceil$
which
leads to a coarse problem
of
size $N_H = \lceil \frac{k^2}{2} \rceil +2$ and a fine problem of size $N_h =
2N_H-1$.  We plot 
the field of values of the
 matrices $\A\A_{\eps}^{-1}$ and $\A\B_{\eps}$ using the method of Johnson
\cite{Johnson:1978ha},
for increasing wavenumbers $k$ and various choices
of $\eps$. The main point of this experiment is to show that for
increasing
wavenumbers $k$ and a shift
$\eps \sim k^2$ the set
$\mathcal{F}_{\I}(\A\B_{\eps})$ remains
bounded
away
from the origin, as predicted by the theory, in contrast to
$\mathcal{F}_{\I}(\A\A_{\eps}^{-1})$,
which
moves closer to the origin as $k$ is increased. 
The results of this experiment are shown in Figures \ref{fig:fov1}
and \ref{fig:fov2}. Note that in this case the field of values $\mathcal{F}_{\I}(\A\B_{\eps})$ is practically equal to a single point.

We repeat this computation choosing a number
of interior
gridpoints for the coarse mesh equal to $\lceil
\frac{k^{3/2}}{2}
\rceil$,
which
leads to a coarse problem of
size $N_H = \lceil \frac{k^{3/2}}{2} \rceil +2$ and a fine problem of size $N_h=
2N_H-1$. The results of this experiment are shown in Figures
\ref{fig:fov3} and \ref{fig:fov4}. We see that that under a less restrictive condition on the
meshsize the field of values of
$\A\B_{\eps}$
remains
bounded away from zero as $k$ is increased.  This is not predicted by our
theory,
but can be explained from the fact that it has been shown in
\cite{Ihlenburg:1995eta} that the
condition $N_h \sim k^{3/2}$ is sufficient to the  obtain a
'pollution-free'
solution to the Helmholtz problem with the Galerkin method in 1-D. However,
this has not been proved in higher dimensions.

\begin{figure}[t]
\centering
\begin{subfigure}[b]{0.48\textwidth}
\centering
\includegraphics[width=0.95\textwidth]{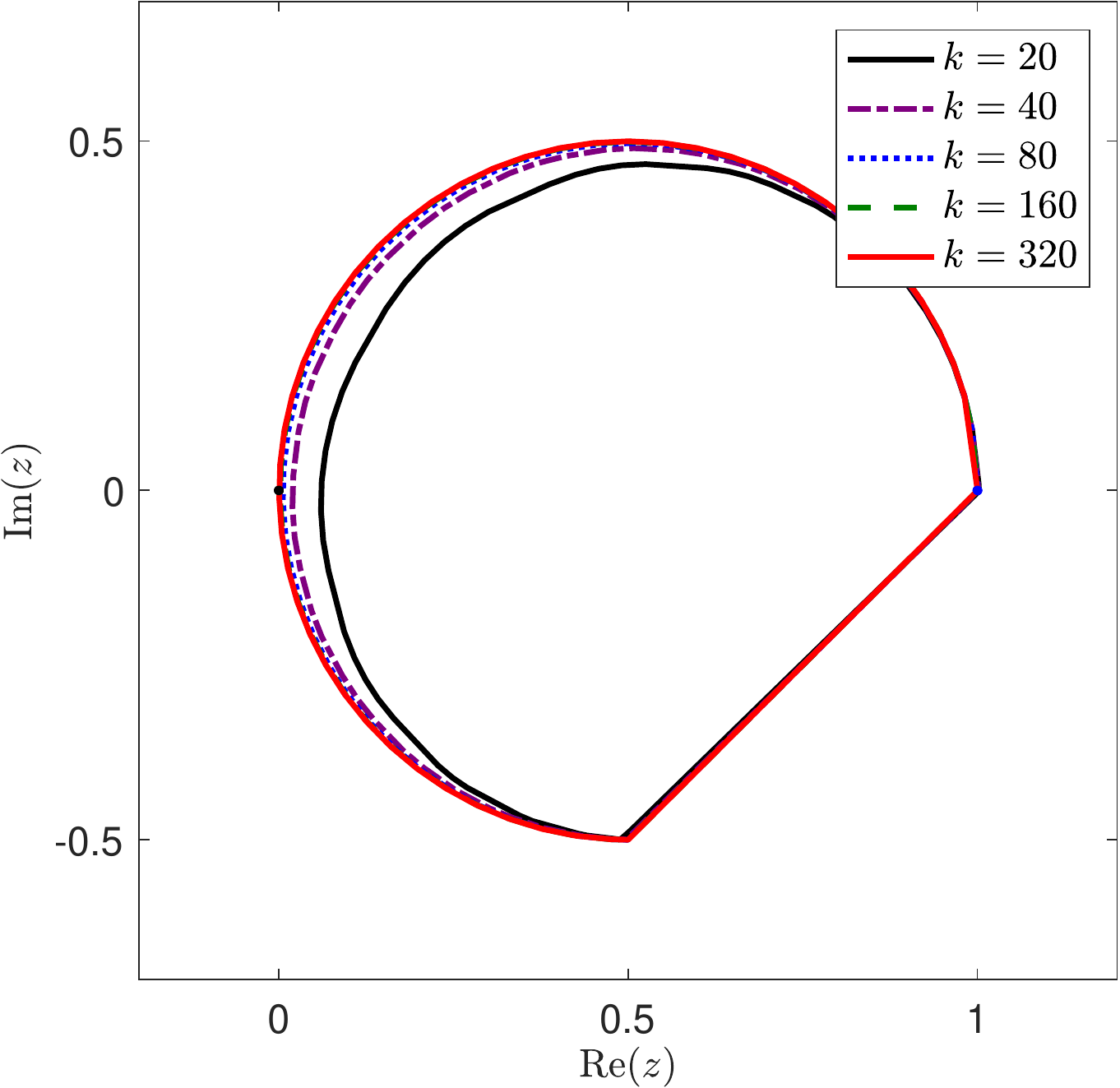}\end{subfigure}
\begin{subfigure}[b]{0.48\textwidth}
\centering
\includegraphics[width=0.95\textwidth]{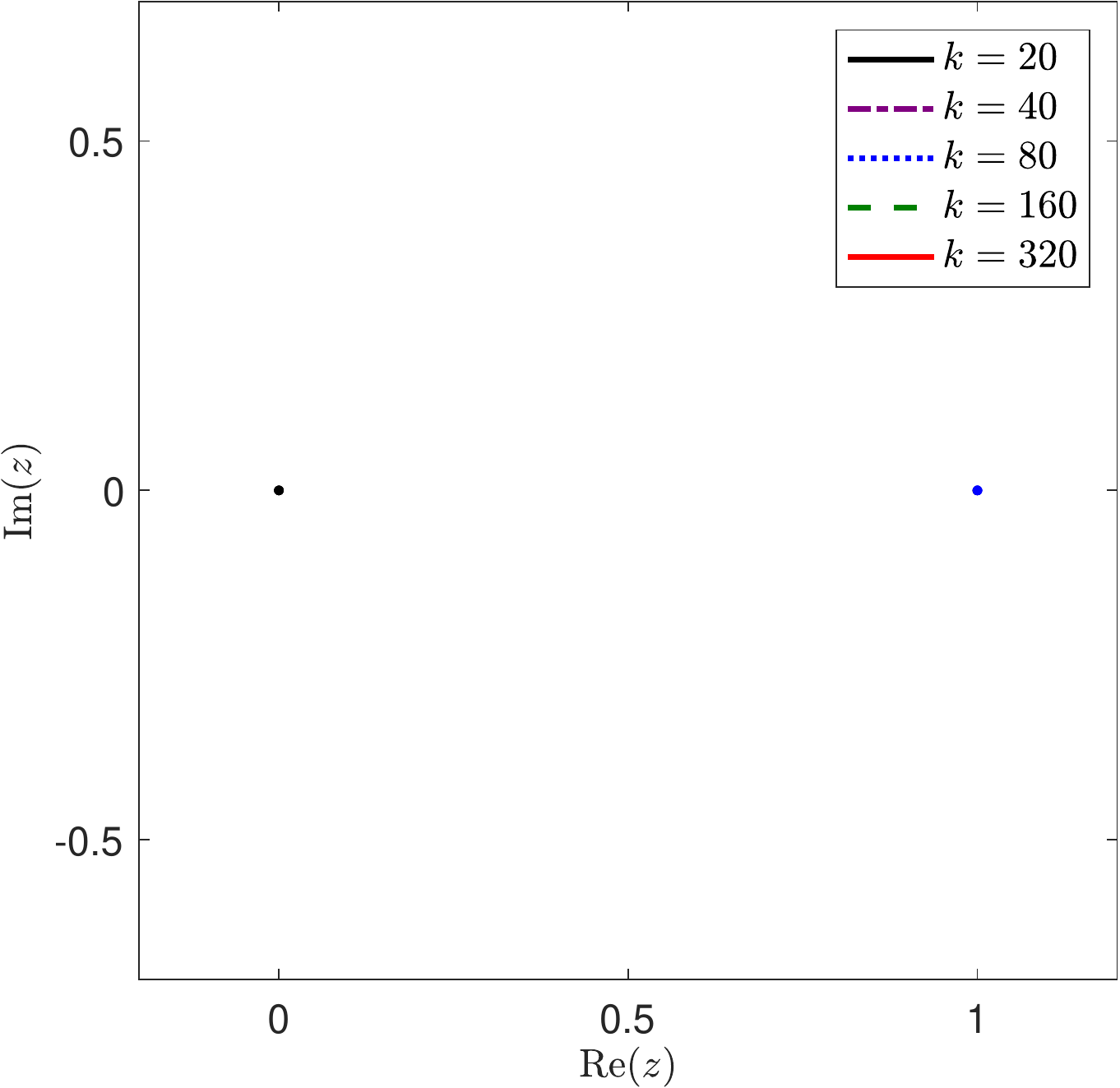}\end{subfigure}
\caption{Field of values of $\A\A_{\eps}^{-1}$ (left) and $\A\B_{\eps}$
(right)
for a 1D Helmholtz problem and various values of $k$, with $\eps = k^2$
and
$N_H \sim k^2$.}
\label{fig:fov1}
\end{figure}

\begin{figure}[t]
\centering
\begin{subfigure}[b]{0.49\textwidth}
\centering
\includegraphics[width=0.95\textwidth]{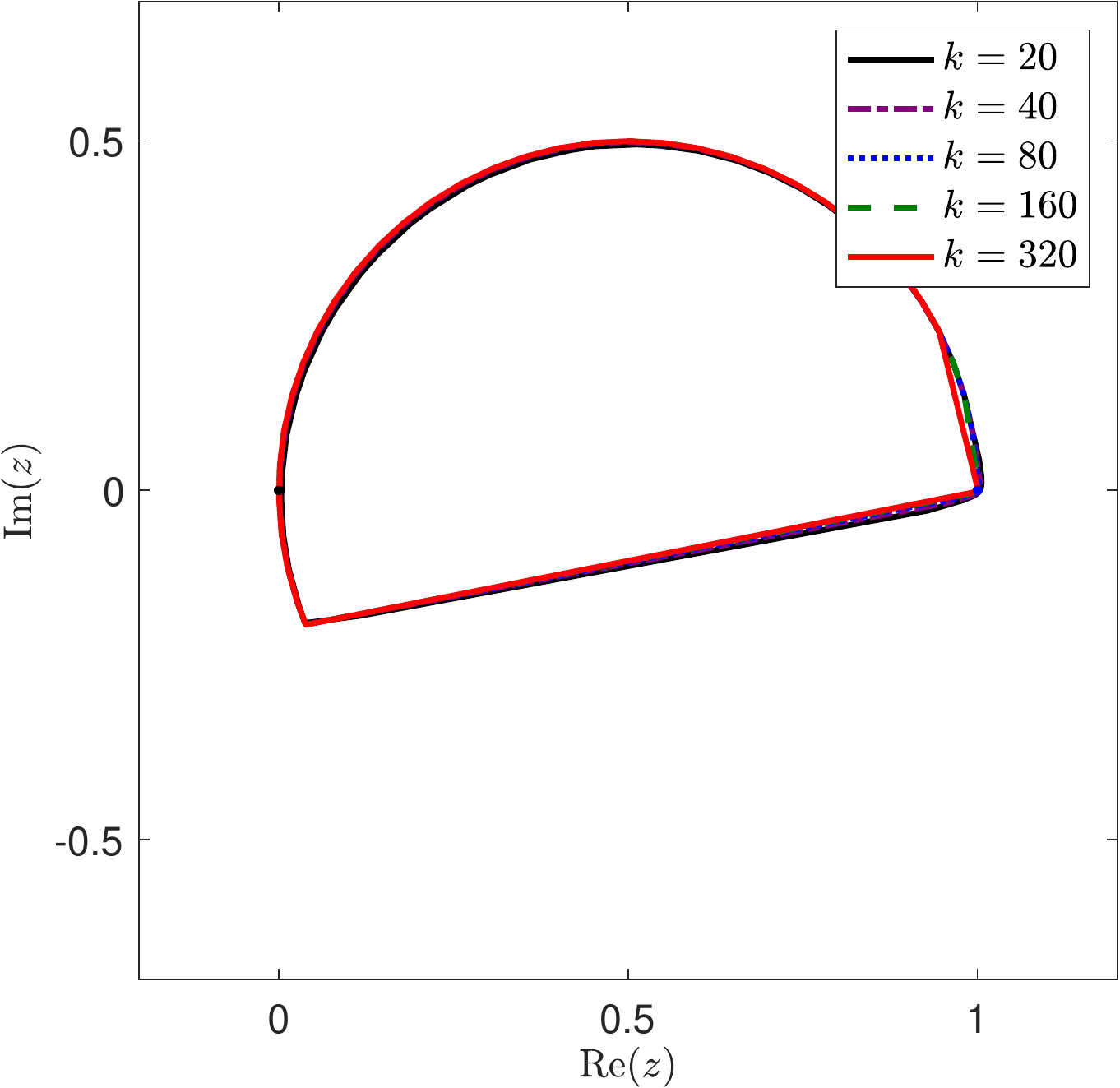}
\end{subfigure}
\begin{subfigure}[b]{0.49\textwidth}
\centering
\includegraphics[width=0.95\textwidth]{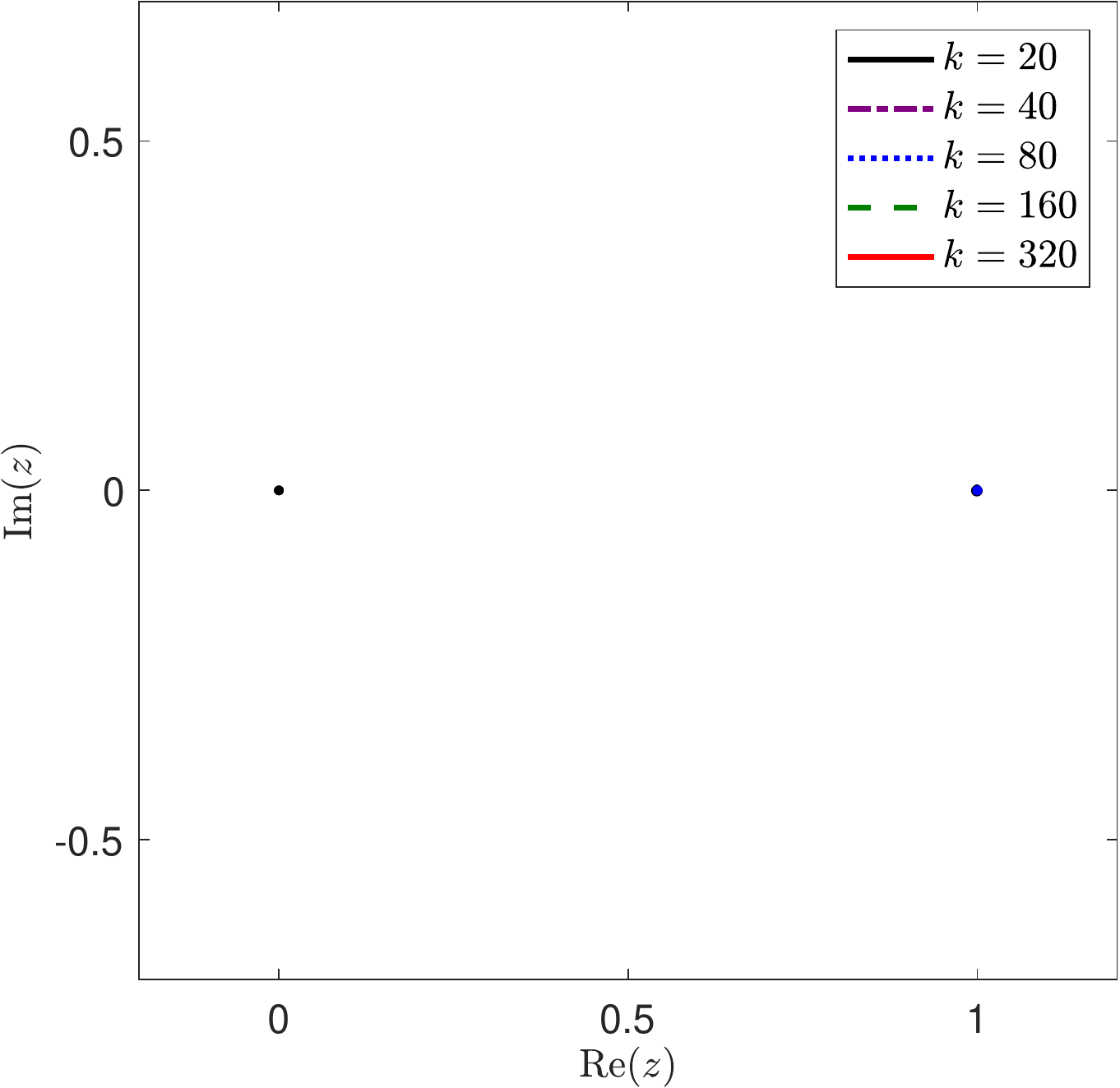}
\end{subfigure}
\caption{Field of values of $\A\A_{\eps}^{-1}$ (left) and $\A\B_{\eps}$
(right)
for a 1D Helmholtz problem and various values of $k$, with $\eps = 5k^2$
and
$N_H \sim k^2$.}
\label{fig:fov2}
\end{figure}

\begin{figure}[t]
\centering
\begin{subfigure}[b]{0.49\textwidth}
\centering
\includegraphics[width=0.95\textwidth]{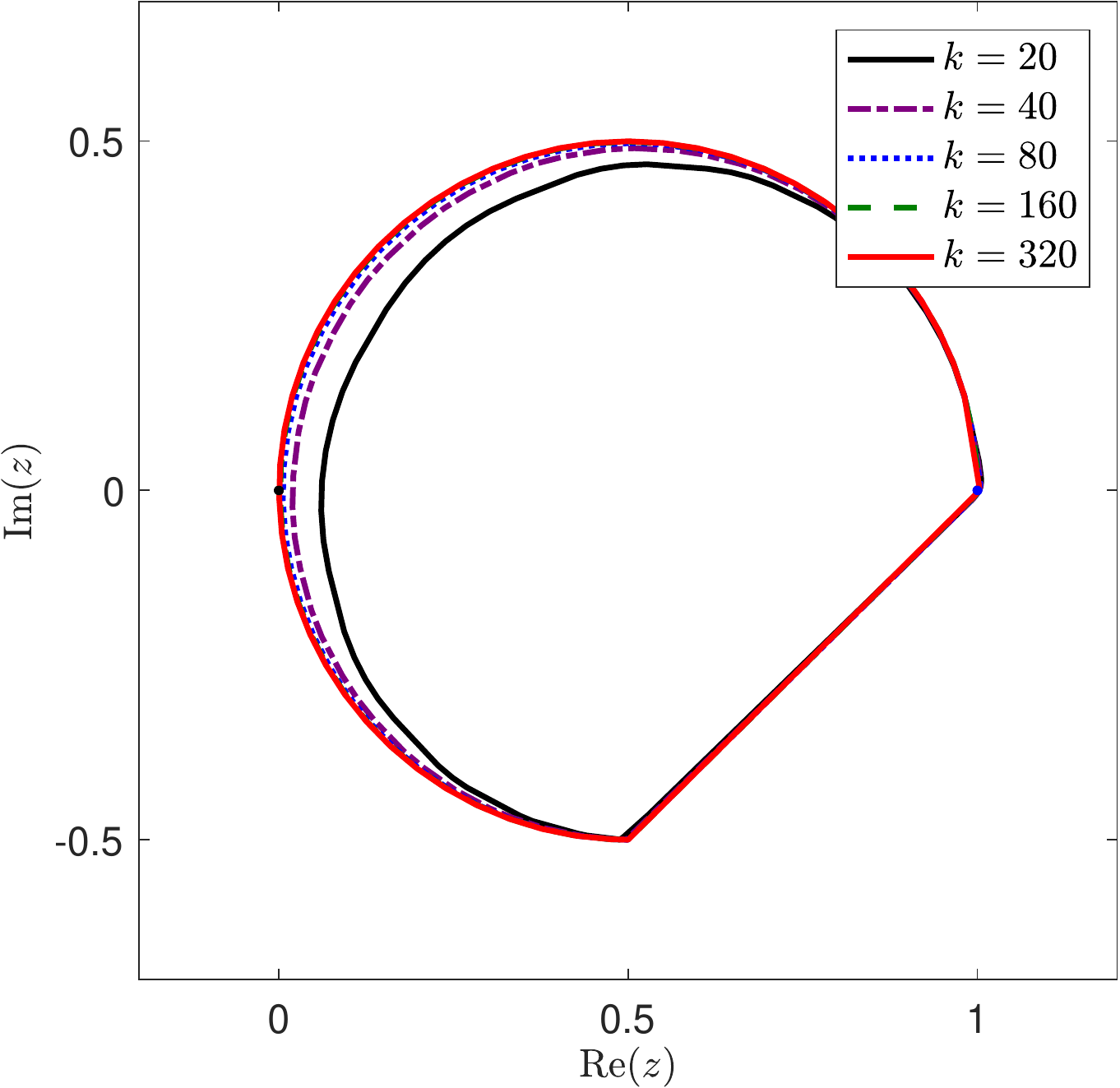}
\end{subfigure}
\begin{subfigure}[b]{0.49\textwidth}
\centering
\includegraphics[width=0.95\textwidth]{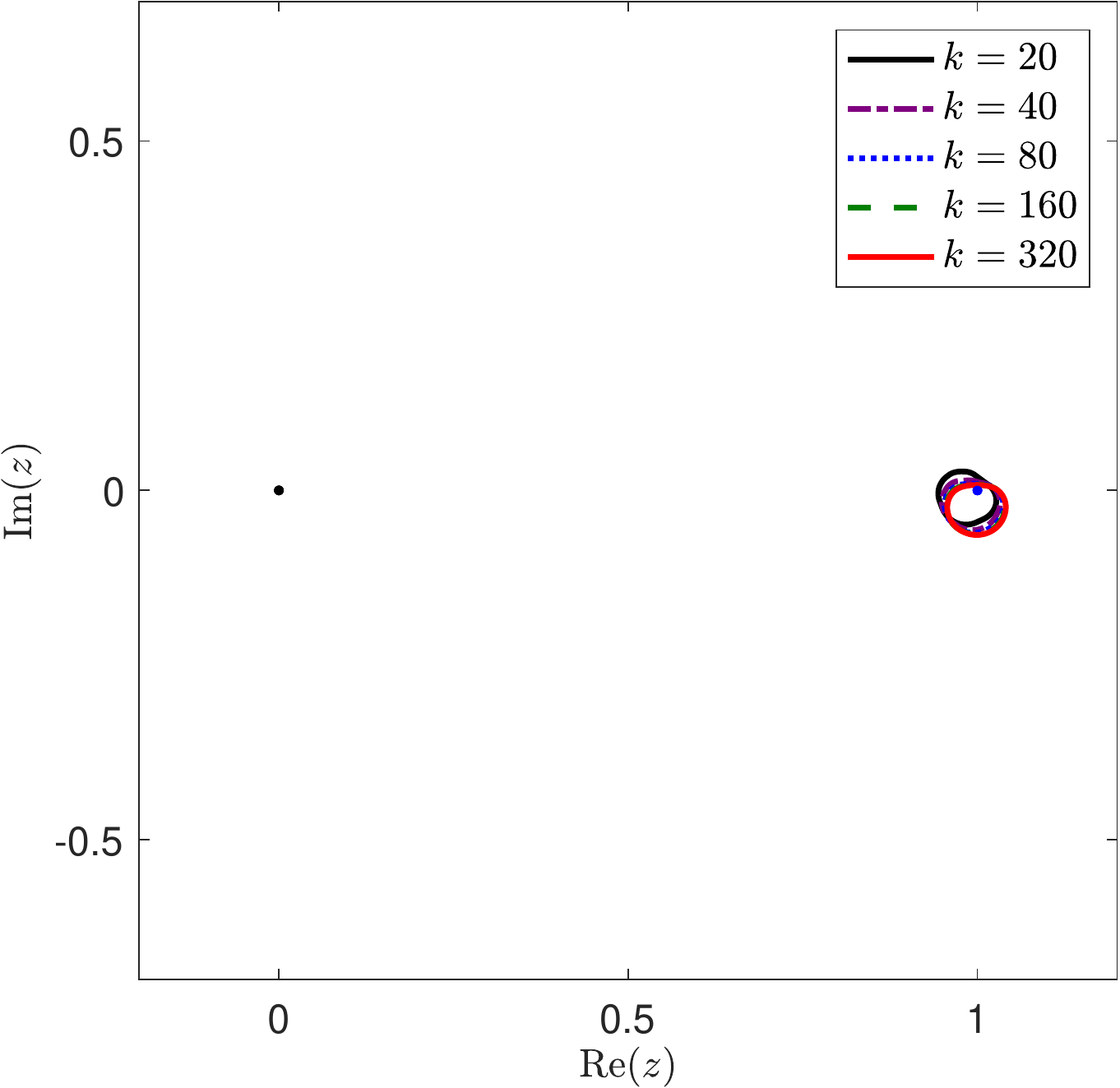}\end{subfigure}
\caption{Field of values of $\A\A_{\eps}^{-1}$ (left) and $\A\B_{\eps}$
(right)
for a 1D Helmholtz problem and various values of $k$, with $\eps = k^2$
and
$N_H \sim k^{3/2}$.}
\label{fig:fov3}
\end{figure}

\begin{figure}[t]
\centering
\begin{subfigure}[b]{0.49\textwidth}
\centering
\includegraphics[width=0.95\textwidth]{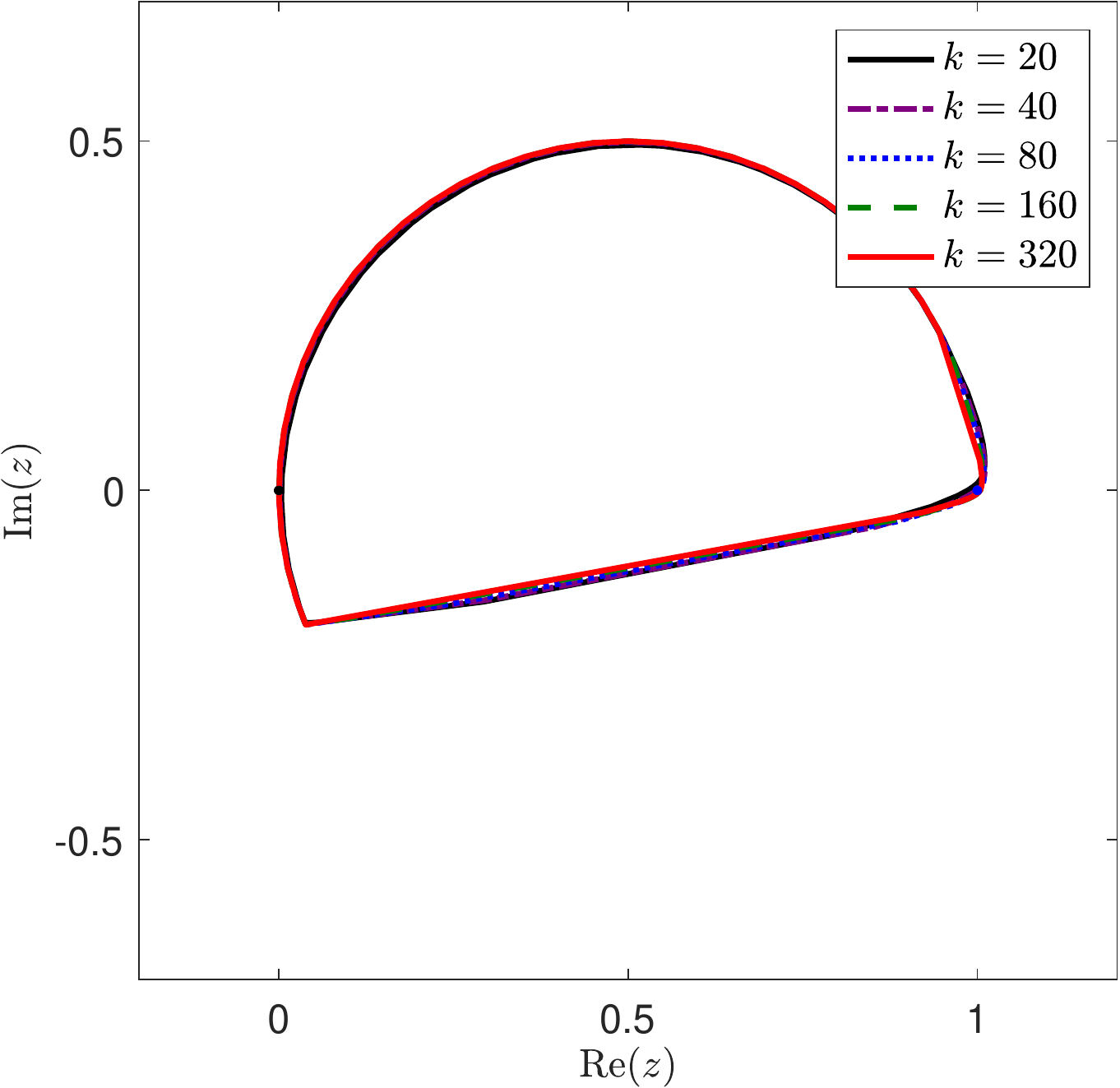}\end{subfigure}
\begin{subfigure}[b]{0.49\textwidth}
\centering
\includegraphics[width=0.95\textwidth]{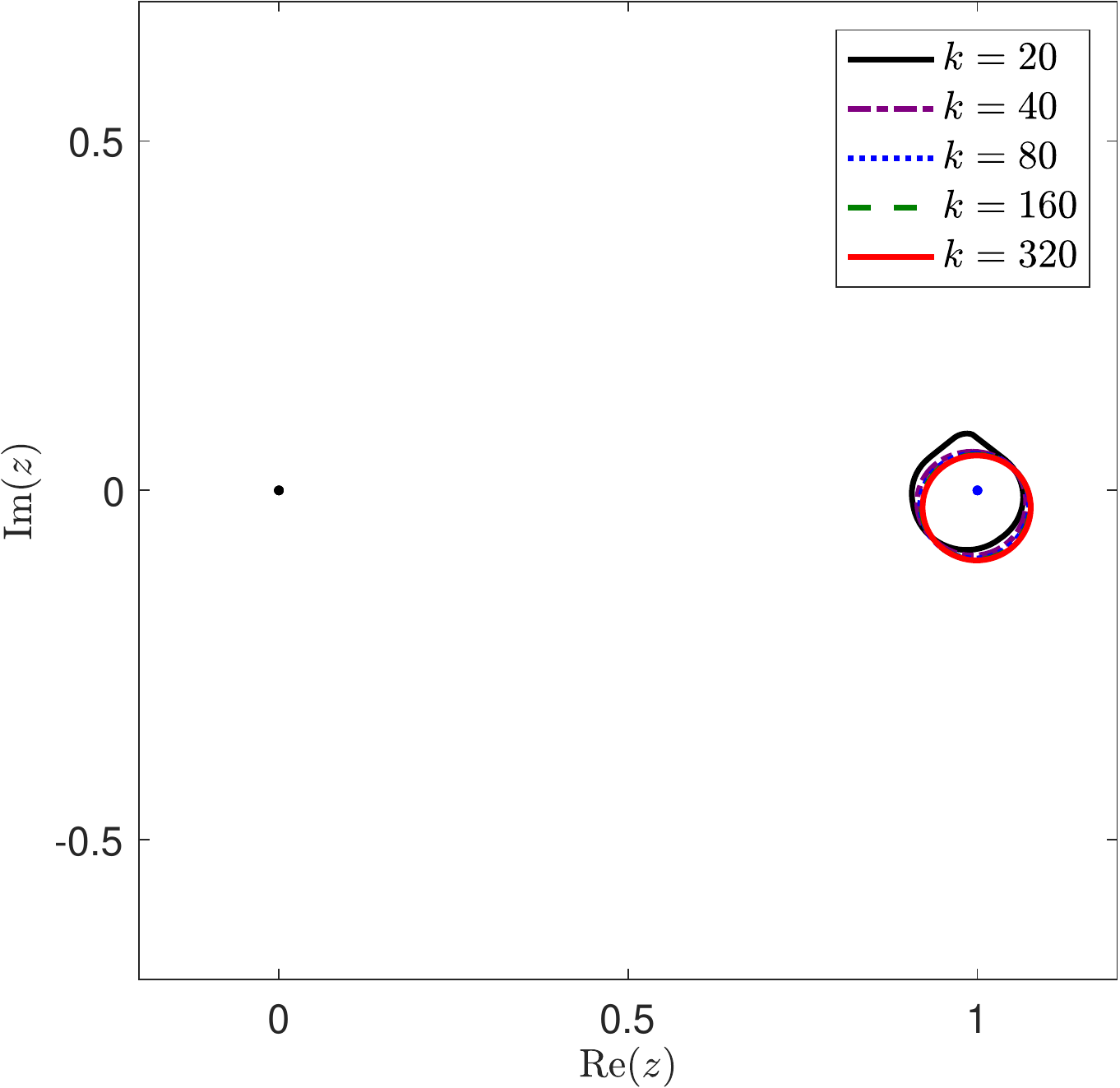}\label{fig-b}
\end{subfigure}
\caption{Field of values of $\A\A_{\eps}^{-1}$ (left) and $\A\B_{\eps}$
(right)
for a 1D Helmholtz problem and various values of $k$, with $\eps = 5k^2$
and
$N_H \sim k^{3/2}$.}
\label{fig:fov4}
\end{figure}

\subsection*{Experiment 2} For our next experiment, we use GMRES to solve the interior impedance problem \eqref{eq:helmholtz} on the unit square $(0,1)^2$, discretized  with  a uniform triangular grid. The right hand side is the
constant vector of ones, and the initial guess the zero vector. We compare the
complex shifted Laplace (CSL) preconditioner  and the
two-level
preconditioner
(TL) for various values of the shift $\eps$ and distinct coarsening levels. The CSL preconditioner is inverted with one multigrid F(1,1) cycle with
$\omega$-Jacobi smoothing on all levels, where $\omega= 0.6$. The grid is chosen as follows:
starting with a coarse grid with $3$ points in each direction, we refine the
grid uniformly until we obtain a number of points (in one dimension) larger than $\lceil \alpha k^{3/2}\rceil$ where $\alpha = 0.6$. The two-level
preconditioner is tested using three different coarse grids.
If $h$ is the fine meshsize in one dimension, the coarse meshsizes for the three
different methods TL-1, TL-2 and TL-3 are $H=2h,4h,8h$ respectively. Since the coarse grid matrices are still large, we use an incomplete LU factorization with drop tolerance of $10^{-6}$ to simulate the exact solve of the coarse grid systems.
The results are shown in table \ref{table:gmres_csl_vs_adef}. Although the meshsize scales with $k^{-3/2}$ (not with $k^{-2}$, as required by the theory), the number of iterations remains constant when the coarse meshes of meshsize $2h$ and $4h$ are used. For the coarse meshsize $8h$ the number of iterations increases linearly, although at a much slower rate than the number of iterations of GMRES preconditioned by the standalone shifted Laplacian. This experiment shows that the theoretical results are not sharp and that wave-number independent convergence can be obtained also with pollution-free meshes where the mesh size scales with $k^{-3/2}$.
Note that increasing the shift $\eps$ leads to an increase in the number of iterations with the standalone CSL preconditioner, and  for the wavenumbers $k=80$ and $k=100$ with the shift $\eps=5k^2$ the GMRES method fails to reach the stopping criterion after 200 iterations. In contrast, the number of iterations remains bounded for the two-level preconditioner even for larger $\eps$.

\begin{table}[t]
\centering
\small
\begin{tabular}{c|cccc|cccc|cccc|}
\cline{2-13}
& \multicolumn{4}{|c|}{$\eps=k^2$}& \multicolumn{4}{c|}{$\eps=2k^2$} & \multicolumn{4}{c|}{$\eps=5k^2$} \\
\hline
 \multicolumn{1}{|c|}{$k$} & CSL & TL-1 & TL-2 & TL-3 & CSL & TL-1 & TL-2 & TL-3 & CSL & TL-1 & TL-2 & TL-3 \\ \hline
\multicolumn{1}{|c|}{$10$} & 14 & 7 & 8 & 10 & 19 & 7 & 9 & 12 & 29 & 7 & 11 & 17 \\
\multicolumn{1}{|c|}{$20$} & 26 & 7 & 10 & 16 & 38 & 7 & 11 & 20 & 65 & 7 & 13 & 29  \\
\multicolumn{1}{|c|}{$40$} & 45 & 6 & 9 & 17 & 79 & 6 & 9 & 19 & 146 & 6 & 9 & 22 \\
\multicolumn{1}{|c|}{$80$} & 78 & 7 & 12 & 36 & 149 & 7 & 12 & 42 & - & 7 & 12 & 46  \\
\multicolumn{1}{|c|}{$100$}& 99 & 6 & 9 & 21 & 184 & 6 & 9 & 22 & - & 6 & 9 & 23\\
\hline
\end{tabular}
\caption{Experiment 2. Number of GMRES iterations for the Helmholtz linear system preconditioned by the  CSL and the two-level method (TL) for various values of the shift $\eps$ and different levels of coarsening.}
\label{table:gmres_csl_vs_adef}
\end{table}

\subsection*{Experiment 3} In our next experiment, we solve again the interior impedance problem \eqref{eq:helmholtz} on the unit square $(0,1)^2$ with the same discretization, initial guess and right hand side as in our previous experiment, but this time we use a multilevel extension of the preconditioner, i.e., a multilevel Krylov method \cite{Erlangga:2008bp, Sheikh:2016eg}. This setting is not included in our theory but is more practically relevant since in realistic applications a two-grid preconditioner can be two expensive to apply. The multilevel preconditioner is implemented within a flexible GMRES (FGMRES) iteration \cite{Saad:1993fc}, and every inexact coarse-grid solve (with a fixed small number of iterations) is performed by another FGMRES iteration, continuing (recursively) through all the grids until the coarsest grid is reached. For more details on the implementation of multilevel Krylov methods we refer the reader to \cite{Erlangga:2008bp,Kehl:2019hj}.

To set up a multilevel Krylov method requires fixing a number of iterations for the intermediate levels. We denote by MK$(l$,$m$,$n$) a method with $l,m,n$ iterations in the second, third and fourth level grid respectively, and one iteration in the remaining coarser grid levels. In this experiment we compare the CSL with the multilevel Krylov methods MK(8,4,2) and MK(6,4,2). The results are shown in Table 
\ref{table:mlgmres_csl_vs_ml}. Similarly as in the two-level case, the multilevel preconditioner outperforms the CSL and requires a constant number of iterations to reach the desired tolerance, even though the intermediate coarse solves are done only inexactly with a small number of iterations. Note also that decreasing the number of iterations in the second level only increases the number of outer iterations by one or two, and the computation times of the two methods MK(8,4,2) and MK(6,4,2) are very similar.

\begin{table}[t]
\centering
\small
\begin{tabular}{c|cc|cc|cc|cc|cc|cc|}
\cline{2-13}
& \multicolumn{6}{|c|}{$\eps = k^2$} & \multicolumn{6}{|c|}{$\eps = 2k^2$}  \\
\cline{1-13}
\multicolumn{1}{|c|}{}& \multicolumn{2}{|c|}{CSL}   & \multicolumn{2}{|c|}{MK(8,4,2)} & \multicolumn{2}{|c|}{MK(6,4,2)} & \multicolumn{2}{|c|}{CSL}   & \multicolumn{2}{|c|}{MK(8,4,2)} & \multicolumn{2}{|c|}{MK(6,4,2)} \\
\multicolumn{1}{|c|}{$k$}& Iter & Time &Iter & Time &Iter &Time & Iter & Time &Iter & Time &Iter &Time \\
\hline
\multicolumn{1}{|c|}{20} & 26 & 0.42 & 7 & 0.77 & 7 & 0.54 & 38  & 0.54   & 7 & 0.70 & 7 & 0.48 \\
\multicolumn{1}{|c|}{40} & 45  & 3.18    & 6  & 3.54     & 6  & 2.39        & 79  & 6.79   & 6      & 2.83     & 6   & 2.14     \\
\multicolumn{1}{|c|}{80} & 78  & 31.78   & 7  & 8.85     & 7  & 6.75        & 149 & 93.73  & 7      & 9.19     & 7   & 7.09     \\
\multicolumn{1}{|c|}{120} & 130 & 377.22  & 7  & 32.812   & 8  & 30.14       & -   & -      & 7      & 33.28  & 8   & 30.02  \\
\multicolumn{1}{|c|}{160} & 142 & 8121.02 & 6  & 124.87   & 8  & 129.24      & -   & -      & 7      & 150.37  & 8   & 127.34 \\
\hline
\end{tabular}
\caption{Experiment 3. Number of GMRES iterations and computation time (in seconds) for the Helmholtz linear system preconditioned by the  CSL and the multilevel Krylov method (MK).}
\label{table:mlgmres_csl_vs_ml}
\end{table}

\subsection*{Experiment 4} In our final experiment we solve the impedance problem on the square $\Omega = (0,1)^2$ with a space-dependent wavenumber. This problem is adapted from \cite{Erlangga:2004bg}. The space-dependent wavenumber is given by
\begin{equation*}	
k(x,y)= \begin{cases}
   (4/3)k_{\text{ref}} & \text{if } 0 \leq y <0.2x+0.2\\
    k_{\text{ref}} & \text{if } 0.2x+0.2 \leq y <-0.2x+0.8\\
    2k_{\text{ref}} & \text{if }-0.2x+0.8\leq y <1
\end{cases}
\end{equation*}
where $k_{\text{ref}} > 0$ is a reference wavenumber. This function is depicted in Figure \ref{fig:wedge-domain}.  As the reference wavenumber $k_{ref}$ is varied, we choose the number of points for a uniform triangular mesh similarly as in the previous problems, with the number of points in one direction proportional to  
$\lceil \alpha k_{ref}^{3/2}\rceil$ with $\alpha = 1.1$. The  larger value of $\alpha$ is chosen to take into account the fact that the maximum wavenumber over the domain is $2k_{ref}$. Similarly to the previous experiments, the CSL preconditioner is compared here with the multilevel Krylov methods MK(8,4,2) and MK(6,4,2). The results are shown in Table \ref{table:mlgmres_csl_vs_ml_wedge}. 
Similarly to the previous experiments, the number of iterations of the CSL preconditioner grows linearly with the wavenumber, and the number of iterations with either of the multilevel Krylov methods remains constant and the computation times are greatly reduced. 

\begin{figure}[t]
\centering
\begin{tikzpicture}[scale = 3]
\draw[thick] (0,0) rectangle (1,1);
\draw[thick] (0, 0.2) -- (1, 0.4);
\draw[thick] (0, 0.8) -- (1, 0.6);
\node[] at (0.5, 0.85) {$2 k_\text{ref}$};
\node[] at (0.5, 0.5) {$k_\text{ref}$};
\node[] at (0.5, 0.15) {$\frac{4}{3} k_\text{ref}$};
\end{tikzpicture}
\caption{Space-dependent wavenumber in model problem 4.}
\label{fig:wedge-domain}
\end{figure}
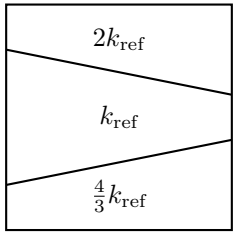

\begin{table}[t]
\centering
\small
\begin{tabular}{c|cc|cc|cc|cc|cc|cc|}
\cline{2-13}
& \multicolumn{6}{|c|}{$\eps = k^2$} & \multicolumn{6}{|c|}{$\eps = 2k^2$}  \\
\cline{1-13}
\multicolumn{1}{|c|}{}& \multicolumn{2}{|c|}{CSL}   & \multicolumn{2}{|c|}{MK(8,4,2)} & \multicolumn{2}{|c|}{MK(6,4,2)} & \multicolumn{2}{|c|}{CSL}   & \multicolumn{2}{|c|}{MK(8,4,2)} & \multicolumn{2}{|c|}{MK(6,4,2)} \\
\multicolumn{1}{|c|}{$k_{ref}$}& Iter & Time &Iter & Time &Iter &Time            & Iter & Time &Iter & Time &Iter &Time \\
\hline
\multicolumn{1}{|c|}{20} & 33 & 1.23 & 6   & 2.02 & 6 & 0.96                        & 52  & 1.44 & 6 & 1.17 & 6 & 0.87 \\
\multicolumn{1}{|c|}{40} & 60 & 24.16 & 6   & 8.01 & 6 & 6.22                   & 105 & 59.96 & 6 & 7.68 & 6 & 5.97   \\
\multicolumn{1}{|c|}{60} & 81 & 132.98 & 5   & 23.42 & 6 & 22.63                & 153 & 424.3 & 5 & 25.53 & 6 & 21.7\\
\multicolumn{1}{|c|}{80} & 106 & 239.8 & 6  & 31.24 & 7 & 34.10                  & 194 & 649.99 & 6 & 28.01 & 7 & 25.27  \\
\multicolumn{1}{|c|}{100} & 126 & 7614.88 & 6 & 137.77 & 6 & 93.90                  & - & - & 6 & 139.88  & 7 & 105.456  \\
\hline
\end{tabular}
\caption{Experiment 4. Number of GMRES iterations and computation time (in seconds) for the Helmholtz linear system with space-dependent wavenumber  preconditioned by the  CSL and the multilevel Krylov method (MK).}
\label{table:mlgmres_csl_vs_ml_wedge}
\end{table}

\section{Conclusions} 
In this paper we have presented a two-level shifted Laplace preconditioner for Helmholtz problems discretised with finite elements. We used the convergence theory of GMRES based on the field of values
to rigorously establish that GMRES will converge in a  number of iterations independent of the wavenumber if a condition of the form $HK^2 \leq C$ holds, for a constant $C$ independent of the wavenumber $k$ but possibly dependent on the size of the complex shift $\eps$. We have also shown in numerical experiments that wavenumber independent convergence can also be obtained under the weaker condition $HK^{3/2} \leq C$, using a multilevel extension of the preconditioner (a multilevel Krylov method with inexact coarse grid solves), and for a test problem with heterogeneous wavenumber.

\bibliographystyle{siam}
\bibliography{biblio.bib}

\end{document}